\def\ps@pprintTitle{%
	\let\@oddhead\@empty
	\let\@evenhead\@empty
	\def\@oddfoot{\reset@font\hfil\thepage\hfil}
	\let\@evenfoot\@oddfoot
	
}
\newtheorem*{th25}{Theorem 2.5}
\newtheorem*{th27}{Theorem 2.7}
\newtheorem*{th32}{Theorem 3.2}
\newtheorem*{th43}{Theorem 4.3}
\newtheorem*{cr48}{Corollary 4.8}
\newtheorem*{q3}{Question 3}
\newtheorem{theorem}{Theorem}[section]
\newtheorem{lemma}[theorem]{Lemma}
\newtheorem{corollary}[theorem]{Corollary}
\newtheorem{prop}[theorem]{Proposition}
\newtheorem{que}{Question}
\newcommand{\mybinom}[3][0.8]{\scalebox{#1}{$\dbinom{#2}{#3}$}}
\newtheorem{Remark}{Remark}
\numberwithin{equation}{theorem}
\theoremstyle{definition}
\newtheorem{definition}[theorem]{Definition}
\newcommand\dela[1]{}
\journal{}
\begin{document}
	
	\begin{frontmatter}
		
		\title{The maximum number of triangles in a graph and its applications to special $p$-groups}
		
		\author[IISER TVM]{T.N. Mavely}
		\ead{tonynixonmavely17@iisertvm.ac.in}
		\author[IISER TVM]{V.Z. Thomas\corref{cor1}}
		\address[IISER TVM]{School of Mathematics,  Indian Institute of Science Education and Research Thiruvananthapuram,\\695551
			Kerala, India.}
		\ead{vthomas@iisertvm.ac.in}
		\cortext[cor1]{Corresponding author. \emph{Phone number}: +91 8921458330}
		
		\begin{abstract}
			We give a sharp bound on the number of triangles in a graph with fixed number of edges. We also characterize graphs that achieve the maximum number of triangles. Using the upper bound on number of triangles, we prove that if $G$ is a special $p$-group of rank $2 \leq k \leq \binom{d}{2} $, then $|\mathcal{M}(G)| \leq p^{\frac{d(d+2k-1)}{2}-k- \binom{d}{3}+ \binom{r}{3} + \mybinom[.55]{ \binom{d}{2} -k- \binom{r}{2} }{2} }$, where $r$ is such that $\binom{r}{2}  \leq \binom{d}{2} -k < \binom{r+1}{2} $. We also prove that, if $G$ is a $p$-group $(p \neq 2,3)$ of class $c \geq 3$, then $|\mathcal{M}(G)| \leq p^{\frac{d(m-e)}{2}+(\delta-1)(n-m)-\max(0,\delta-2)-\max(1,\delta-3)}$ and if $G$ is of coclass $r$ with class $c \geq 3$, then $|\mathcal{M}(G)| \leq p^{\frac{r^2-r}{2}+kr}$. 
		\end{abstract}
		
		\begin{keyword}
			Schur multiplier  \sep graphs \sep cycles \sep coclass \sep special $p$-groups
			\MSC[2020]    20D15  \sep 20F18 \sep 20J05 \sep 20J06 \sep 05C38 \sep 05D99
		\end{keyword}
		
	\end{frontmatter}

	\section{Introduction} 
	
	The aim of this article is twofold: one is to solve a problem in extremal graph theory and then use that solution to give an application to special $p$-groups. Now we describe this in more detail. Assume $\mathcal{G}$ to be a simple, loopless and undirected graph i.e., there is exactly one edge connecting two vertices $v$ and $w$ of $\mathcal{G}$, and there are no edges whose two endpoints are actually the same vertex. In \cite{Rivi2002}, Rivin considers the following question:
	
	\begin{que}\label{q1}
		Let $\mathcal{G}$ be a graph with $E$ edges. Let $T$ be the number of triangles of $\mathcal{G}$. Show that there exists a constant $C$ such that $T \leq CE^{3/2}$ for all $\mathcal{G}$.
	\end{que}
	
	Rivin \cite{Rivi2002} answers this question and gives the following bound:
	\begin{align} \label{oldrivinbound}
		T \leq \frac{V-2}{\sqrt{V(V-1)}}\frac{2^{1/2}}{3}E^{3/2},
	\end{align}
	
	$V$ being the number of vertices of the graph $\mathcal{G}$. Rivin \cite[Theorem 2]{Rivi2002} further proves that the complete graph $K_n$ on $n$ vertices has maximal number of triangles among all the graphs with the same number of edges. More explicitly, this also says that for graphs with number of edges being triangular i.e $\binom{n}{2} \ \forall \ n \geq 2$, the maximum number of triangles is $\binom{n}{3}$. Motivated by this, we ask the following more general question:
	
	\begin{que} \label{q2}
		Among all graphs $\mathcal{G}$  with $n$ edges, what is the maximum number of triangles in $\mathcal{G}$?
	\end{que}
	We answer Question $\ref{q2}$ with a sharp bound in the following theorem:
	\begin{th25}
		Let $\binom{r}{2} \leq n < \binom{r+1}{2}$ and set $t:=n- \binom{r}{2}$. Then the number of triangles for any graph $\mathcal{G}$ with $n$ edges is less than or equal to $ \binom{r}{3} + \binom{t}{2}$. Furthermore there exists graphs that achieve the bound for each $n$.
	\end{th25}	
	
	For a graph with $E=\binom{r}{2}+t \ (1 \leq t \leq r-1 )$ edges, the minimum number of vertices required is $r+1$. Noting that $\frac{V-2}{\sqrt{V(V-1)}}$ is an increasing function, the smallest bound that can be obtained from $\ref{oldrivinbound}$ for given number of edges is by taking $V=r+1$, giving $\frac{r-1}{\sqrt{(r+1)r}}\frac{2^{1/2}}{3}(\binom{r}{2}+t)^{3/2}$. Note that Theorem $\ref{trianglebound}$ improves the bound in \ref{oldrivinbound}. If $t=0$, $E$ is a triangular number and both bounds coincide. Rivin \cite{Rivi2002} also asks the following natural question:
	\begin{que} \label{q3}
		Is there a simple characterization of graphs with $n$ edges which are triangle maximal (for all n)?
	\end{que}
	
	As an application of Theorem $\ref{trianglebound}$, we can answer Question \ref{q3} in the next theorem which characterizes graphs with fixed number of edges and maximal number of triangles.
	\begin{th27}
		Let $ \binom{r}{2}  \leq n < \binom{r+1}{2} $ and $t:=n- \binom{r}{2}$. Let $\mathcal{G}$ be the graph with $n$ edges and maximum number of triangles.
		\begin{enumerate}
			\item If $t=0 $, then $\mathcal{G}$ is the complete graph $K_r$ \cite[Theorem 2]{Rivi2002}.
	
			\item If $t = 1$ and $\mathcal{G}$ is connected, then $\mathcal{G}$ is the graph obtained by attaching a vertex $v$ with an edge to the complete graph $K_r$, where the edge is incident to a vertex of $K_r$.
			
			\item If $t=1$ and $\mathcal{G}$ is disconnected, then $\mathcal{G}$ is the graph $K_2 \cup K_r$.
						
			\item If $t \neq 0,1$, then $\mathcal{G}$ is the graph obtained by attaching a vertex $v$ with $t$ edges to the complete graph $K_r$, where the $t$ edges are incident to $t$ vertices of $K_r$.
			
		\end{enumerate}
	\end{th27}
	
	The above results are of independent interest, but our aim is to apply them to obtain a bound on the size of the Schur multiplier of special $p$-groups.	Let $G$ be a finite $p$-group of order $p^n$. A finite $p$-group $G$ is called a special $p$-group of rank $k$ if $\gamma_2 G=Z(G)=\Phi(G)$ is elementary abelian of order $p^k$. Special $p$-groups of rank $1$ are called extraspecial. The Schur multiplier of extraspecial $p$-groups has been studied in \cite[Corollary 4.16, p.223]{BeyTap1982}.
	In \cite{Hatu2020}, Hatui has given upper bounds for the size of Schur multiplier of special $p$-groups of rank $2$. In \cite[Theorem 1.1]{Rai2018}, Rai has given an upper bound for the size of the Schur multiplier of special $p$-groups of maximum rank $\binom{d}{2}$ when $d \geq 3$ where $d=d(G)$. To obtain sharp bounds for the size of the Schur multiplier of special $p$-groups of ranks $k \ (2 < k < \binom{d}{2})$ is more challenging. We use a novel approach of finding the maximum number of triangles in a graph in computing the size of the Schur multiplier for these cases. As an application of Theorem $\ref{trianglebound}$, we obtain the following theorem:
	\begin{th32} 
		Let $G$ be a special $p$-group of order $p^n$ and rank $k$, $2 \leq k \leq \binom{d}{2} $, where $d=d(G)$ is the minimal number of generators of $G$. Then $|\mathcal{M}(G)| \leq p^{\frac{d(d+2k-1)}{2}-k- \binom{d}{3} + \binom{r}{3} + \mybinom[.55]{ \binom{d}{2} -k- \binom{r}{2} }{2} }$, where $r$ is such that $ \binom{r}{2} \leq \binom{d}{2} -k < \binom{r+1}{2}$.
	\end{th32}
	
	\begin{Remark}
		Through calculations using GAP \cite{GAP4}, we observe that there are special $p$-groups of ranks $2,3,4 $ and $6$ that achieve the bound in Theorem $\ref{specialbound}$. In particular the groups given in Table $\ref{table}$ achieve the bound for corresponding values of $k$ and $d$. 
		
		$$\textbf{Table 1}$$
		\begin{center} \label{table}
			\begin{tabular}{ |c|c|c|c|c| } 
				\hline
				GroupId & Order & d(G) & Rank & $|\mathcal{M}(G)|$\\ 
				\hline
				$37$ & $3^5$ & $3$ & $2$ & $3^6$ \\ 
				$122$ & $3^6$ & $3$ & $3$ & $3^8$ \\
				$6477$ & $3^7$ & $4$ & $3$ & $3^{12}$  \\ 
				$263726$ & $3^8$ & $4$ & $4$ & $3^{14}$  \\
				\hline
			\end{tabular}
		\end{center}
	\end{Remark}

	Considering the particular case $k=\binom{d}{2}$ in Theorem $\ref{specialbound}$, we obtain the bound given in \cite[Theorem 1.1]{Rai2018}. Note that in \cite[Theorem 1.4(b)]{Hatu2020}, one obtains that the size of the Schur multiplier of special $p$-groups $(p \text{ odd})$ of rank $2$ is less than or equal to $p^{\frac{1}{2}d(d-1)+3}$ and this bound is achieved in \cite[Theorem 1.4(d)]{Hatu2020}. Setting $k=2$ in Theorem $\ref{specialbound}$, we obtain this bound in \cite{Hatu2020}. In \cite[Theorem 2]{EllWie1999}, Ellis and Wiegold proved that $|\mathcal{M}(G)| \leq p^{\frac{d(n-k-e)}{2}+(\delta-1)k-\max(0, \delta-2)}$ where $d=d(G)$, $\delta =d(G/Z)$ and $p^e$ is the exponent of $G^{ab}$. Rai sharpens this bound in \cite[Theorem 1.2]{Rai2017} by proving that $|\mathcal{M}(G)| \leq p^{\frac{1}{2}(d-1)(n-k-(\alpha_1-\alpha_d))+(\delta-1)k-\max(0, \delta-2)}$, where $G^{ab}=C_{p^{\alpha_1}} \times C_{p^{\alpha_2}} \times \ldots \times C_{p^{\alpha_d}} \ (\alpha_1 \geq \alpha_2 \geq \ldots \geq \alpha_d)$. We improve these bounds for groups of nilpotency class greater than or equal to 3 when $p \neq 2,3$. In particular, we prove:
	
	\begin{th43} 
		Let $G$ be a finite $p$-group of order $p^n \ (p \neq 2, 3),$ and nilpotency class $c \geq 3$. Then $|\mathcal{M}(G)| \leq p^{\frac{d(n-k-e)}{2}+(\delta-1)k-\max(0,\delta-2)-\max(1,\delta-3)}$.
	\end{th43}
	
	The coclass of $G$ is defined as $r:=n-c$. As a corollary to \cite[Theorem 1.1]{Rai2017}, Rai \cite[Theorem 1.5]{Rai2017} proved that $|\mathcal{M}(G)| \leq p^{\frac{1}{2}(r^2-r)+kr+1}$  for a non-abelian group of order $p^n$, coclass $r$ and with derived subgroup of size $p^k$. Using Theorem \ref{nil3bound}, we improve this bound for $p$-groups $(p \neq 2,3)$ of coclass $r$ with nilpotency class greater than or equal to 3. In particular, we prove:
	\begin{cr48} 
		Let $G$ be a finite $p$-group $(p \neq 2,3)$ of coclass $r$ with nilpotency class greater than or equal to $3$. Then $|\mathcal{M}(G)| \leq p^{\frac{r^2-r}{2}+kr}$.
	\end{cr48}

	$\textbf{Outline of the paper:}$ In section 2, we address a problem in extremal graph theory, namely Question $\ref{q2}$ and Question $\ref{q3}$ and prove Theorem $\ref{trianglebound}$ and Theorem $\ref{maxgraphcharaterize}$ which answer these questions completely. In section 3, we use Theorem $\ref{trianglebound}$ to obtain a bound on the size of the Schur multiplier of special $p$-groups of rank $k \ (2 \leq k \leq \binom{d}{2})$ in Theorem $\ref{specialbound}$. Section 4 is devoted to proving Theorem $\ref{nil3bound}$ which gives a bound on the size of the Schur multiplier of groups of nilpotency class strictly greater than $2$.
	We then	state Corollary $\ref{raicoclassboundimproved}$, which 	gives a bound on the size of the Schur multiplier of groups of coclass $r$ with nilpotency class strictly greater than $2$.
	
	$\textbf{Notation}:$ We will use $\mathcal{G}$ to denote a graph. For a graph $\mathcal{G}$, $V_{\mathcal{G}}$ will denote the set of vertices and $E_{\mathcal{G}}$ will denote the set of edges. We will denote $K_n$ as the complete graph with $n$ vertices. We shall use $d(v)$ to denote the degree of a vertex $v$. Given graphs $\mathcal{G}$ and $\mathcal{H}$, we denote $\mathcal{G} \cup \mathcal{H}$ as the graph whose vertex set and edge set  are the disjoint unions, respectively, of the vertex sets and edge sets of $\mathcal{G}$ and $\mathcal{H}$. We use the following fairly standard notation: we will denote the commutator subgroup of $G$ as $\gamma_2G$; we will use $\gamma_iG$ to denote the $i^{th}$ term of the lower central series of $G$; we will denote the nilpotency class of $G$ by $c$; $d(G)$ will denote the cardinality of a minimal generating set for $G$; we will use $G^{ab}$ to denote the abelianization of $G$. The Schur multiplier of $G$ is denoted by $\mathcal{M}(G)$ and the coclass of $G$ will be denoted by $r$.

	\section{An upper bound on the number of triangles in a graph with $n$ edges}
	
	The main aim of this section is to give a sharp upper bound for Question \ref{q2} and answer Question \ref{q3}. In \cite{Rivi2002}, Rivin considers the Question $\ref{q1}$ and answers it with $\ref{oldrivinbound}$. Rivin \cite[Theorem 2]{Rivi2002} further proves that among all graphs with $\binom{n}{2}$ edges, the complete graph $K_n$ on $n$ vertices has maximal number of triangles. In particular, he proves:
	
	\begin{theorem}[I. Rivin, \cite{Rivi2002}]\label{rivintheorem}
		In a graph $\mathcal{G}$ with no more than $n(n-1)/2$ edges, each edge is contained, on the average, in no more than $n-2$ triangles. Equality holds only for the complete graph $K_n$
	\end{theorem}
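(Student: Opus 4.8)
The plan is to reduce the averaging statement to the single inequality $T \le \binom{n}{3}$ together with an analysis of when it is tight, and to prove that inequality by viewing the triangles of $\mathcal{G}$ as a $3$-uniform hypergraph and applying the Kruskal--Katona theorem. First I would record the double-counting identity that each triangle contributes exactly three edge--triangle incidences, so that the average number of triangles per edge is precisely $3T/E$; the assertion is therefore equivalent to $3T \le (n-2)E$ whenever $E \le \binom{n}{2}$, and in particular implies $T \le \binom{n}{3}$.

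Next I would regard the set $\mathcal{T}$ of triangles of $\mathcal{G}$ as a family of $3$-element subsets of $V_{\mathcal{G}}$. Its $2$-shadow $\partial\mathcal{T}$ (the set of pairs lying in some member of $\mathcal{T}$) is contained in $E_{\mathcal{G}}$, since every edge of a triangle of $\mathcal{G}$ is an edge of $\mathcal{G}$; hence $E \ge |\partial\mathcal{T}|$. Writing $T = \binom{x}{3}$ for a real number $x \ge 2$, the Lov\'asz form of the Kruskal--Katona theorem gives $|\partial\mathcal{T}| \ge \binom{x}{2}$, so $E \ge \binom{x}{2}$. Combining this with $E \le \binom{n}{2}$ and the monotonicity of $\binom{\cdot}{2}$ forces $x \le n$, whence $3T/E \le 3\binom{x}{3}/\binom{x}{2} = x-2 \le n-2$, which is the desired bound.

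For the equality clause I would trace back through the two inequalities used. Equality in $3T/E \le n-2$ forces $x = n$ together with $E = \binom{n}{2}$ and $T = \binom{n}{3}$, and additionally $E = |\partial\mathcal{T}|$, i.e.\ no edge lies outside a triangle. The equality case of Kruskal--Katona at the integer value $x = n$ characterizes $\mathcal{T}$ as the full collection of all $\binom{n}{3}$ triples on an $n$-set, so every pair among these $n$ vertices is an edge; together with $E = \binom{n}{2}$ this identifies $\mathcal{G}$ with $K_n$ (isolated vertices aside).

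The step I expect to be the main obstacle is the equality analysis, since the generalized real-exponent Kruskal--Katona bound is cleanest for the inequality but its rigidity must be invoked at the integer point $x=n$ and then matched against the two separate saturation conditions. An alternative, self-contained route avoiding Kruskal--Katona is Rivin's original spectral argument: using $2E = \operatorname{tr}(A^2) = \sum_i \lambda_i^2$ and $6T = \operatorname{tr}(A^3) = \sum_i \lambda_i^3$ for the adjacency matrix $A$, one maximizes $\sum_i \lambda_i^3$ subject to $\sum_i \lambda_i = 0$ and fixed $\sum_i \lambda_i^2$ by Lagrange multipliers, recovering the bound \eqref{oldrivinbound} and, at $V = n$ and $E = \binom{n}{2}$, the value $\binom{n}{3}$; there I would expect the delicate point to be controlling the number of distinct eigenvalues at the optimum.
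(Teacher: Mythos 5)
The paper contains no proof of this statement for you to be compared against: Theorem \ref{rivintheorem} is imported verbatim from Rivin \cite{Rivi2002} and used as a black box (in Proposition \ref{disconnectedmaximalgraph} and Theorem \ref{maxgraphcharaterize}), so your argument must be judged on its own terms, and on those terms your main route is correct. The incidence count reducing the averaging claim to $3T \le (n-2)E$, the containment $\partial\mathcal{T} \subseteq E_{\mathcal{G}}$, the Lov\'asz-form shadow bound $E \ge |\partial\mathcal{T}| \ge \binom{x}{2}$ when $T = \binom{x}{3}$, and the chain $3T/E \le x-2 \le n-2$ are all sound (with the cosmetic fixes that $x \ge 3$ once $T \ge 1$, and that the cases $T=0$ or $E=0$ are handled trivially). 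Your equality analysis correctly isolates the three saturation conditions $x=n$, $E=\binom{n}{2}$ and $E=|\partial\mathcal{T}|$; the one ingredient you must cite rather than derive is the rigidity of the Kruskal--Katona/Lov\'asz bound at the integer value: a family of $\binom{n}{3}$ triples whose shadow has size exactly $\binom{n}{2}$ must be the complete triple system on an $n$-set. That is a genuine theorem (the equality characterization of minimum shadows, see e.g.\ F\"uredi--Griggs), not a formal consequence of the inequality, so as written your proof has a citation-sized hole exactly where you predicted; with that reference supplied the argument closes and yields $K_n$ up to isolated vertices, the same implicit caveat present in Rivin's statement.

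One genuine correction to your fallback plan: the spectral route as sketched cannot recover the averaged statement. From \eqref{oldrivinbound} one gets $3T/E \le \frac{V-2}{\sqrt{V(V-1)}}\sqrt{2E}$, and since $\frac{V-2}{\sqrt{V(V-1)}}$ increases to $1$ with $V$ (as the paper itself observes after stating \eqref{oldrivinbound}), a graph with $E=\binom{n}{2}$ edges spread over many vertices only receives the bound roughly $\sqrt{n(n-1)} \approx n-\tfrac{1}{2}$, which is weaker than $n-2$; the trace bound specializes to $\binom{n}{3}$ only when $V=n$ is imposed in addition to $E=\binom{n}{2}$. So the Lagrange-multiplier alternative would need a separate argument controlling $V$, and your Kruskal--Katona route should be regarded as the proof rather than the preferred option. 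It is, incidentally, close in spirit to the paper's own Theorem \ref{trianglebound}: the sharp count $\binom{r}{3}+\binom{t}{2}$ proved there by induction is exactly the graph case of the Kruskal--Katona bound, so your approach in effect proves the stronger statement from which both Rivin's theorem and Theorem \ref{trianglebound} follow, whereas the paper proves the latter directly by elementary induction and takes the former on faith.
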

	Rivin \cite{Rivi2002} also asks the following natural questions:
	\begin{q3}
		Is there a simple characterization of graphs with $n$ edges which are triangle maximal (for all n)?
	\end{q3}
	\begin{que} \label{q4}
		Consider all graphs with $E$ edges and $V$ vertices. Is there a way to characterize the one with the most triangles?
	\end{que}
	The following lemma will be useful in estimating the number of triangles in a graph:
	
	\begin{lemma} \label{choose2bound}
		Let $a,b,c,d,m \in \mathbb{N} \cup \{0\}$ such that $a+b=c+d=m$ and $c \geq a \geq b$, then $ \binom{c}{2} + \binom{d}{2}  \geq \binom{a}{2} + \binom{b}{2} $. Equality holds if and only if $c=a$.
	\end{lemma}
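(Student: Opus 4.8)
The plan is to reduce the inequality to a simple statement about products and then dispatch that by an elementary substitution. The key observation is the algebraic identity
\begin{align*}
\binom{c}{2}+\binom{d}{2}-\binom{a}{2}-\binom{b}{2}=\tfrac{1}{2}\bigl(c^2+d^2-a^2-b^2\bigr),
\end{align*}
which holds because the linear contributions $-\tfrac{1}{2}(c+d-a-b)$ cancel by the hypothesis $a+b=c+d=m$. Using $(c+d)^2=(a+b)^2$ once more, I would rewrite $c^2+d^2-a^2-b^2=2(ab-cd)$, so that the whole problem collapses to showing $ab\geq cd$, with equality precisely when $c=a$.

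To establish $ab\geq cd$, I would exploit the ordering $c\geq a\geq b$ together with the common sum. Since $c\geq a$, write $c=a+k$ with $k\in\mathbb{N}\cup\{0\}$; the constraint $c+d=a+b$ then forces $d=b-k$. Expanding gives
\begin{align*}
cd=(a+k)(b-k)=ab-k(a-b)-k^2.
\end{align*}
Both $a-b\geq 0$ (from $a\geq b$) and $k\geq 0$, so each of the two subtracted terms is nonnegative, yielding $cd\leq ab$ and hence the desired inequality $\binom{c}{2}+\binom{d}{2}\geq\binom{a}{2}+\binom{b}{2}$.

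For the equality case I would read off the same expression: $cd=ab$ holds iff $k(a-b)+k^2=0$, and since both summands are nonnegative this forces $k^2=0$, i.e.\ $k=0$ and therefore $c=a$ (whence also $d=b$); conversely $c=a$ trivially gives equality. I do not anticipate any genuine obstacle here, since the statement is the one-variable convexity fact that $x\mapsto\binom{x}{2}$ is convex and $(c,d)$ majorizes $(a,b)$, merely rephrased in integer language. The only point requiring a little care is the degenerate case $a=b$, where the factor $a-b$ vanishes; there the $k^2$ term alone still pins down $k=0$, so the equality analysis goes through unchanged.
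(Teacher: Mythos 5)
Your proof is correct, and it takes a genuinely different route from the paper's. The paper writes $\binom{i}{2}=\sum_{j=1}^{i-1}j$ and compares the two differences $\binom{c}{2}-\binom{a}{2}$ and $\binom{b}{2}-\binom{d}{2}$ as sums of consecutive integers: since $c-a=b-d$, both sums have the same number of terms, and the termwise comparison reduces to $a\geq d$, which follows from $a-d=c-b\geq 0$; strictness when $c>a$ comes from $a>d$ in that case. You instead cancel the linear parts of $\binom{x}{2}=\frac{1}{2}(x^2-x)$ using $a+b=c+d$, square the common sum to reduce the whole inequality to $ab\geq cd$, and settle that by the substitution $c=a+k$, $d=b-k$, getting the explicitly nonnegative defect $ab-cd=k(a-b)+k^2$. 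Your version buys a cleaner equality analysis: the defect is a sum of two visibly nonnegative terms and the $k^2$ term alone forces $k=0$, so the degenerate case $a=b$ needs no separate handling (the paper's argument must instead note that $c>a$ forces $a>d$ to get strictness). It also makes transparent the underlying convexity/majorization fact, which would extend beyond integers. The paper's summation argument, by contrast, stays entirely within the combinatorial reading of $\binom{i}{2}$ and avoids any algebraic manipulation, at the modest cost of tracking the number of summands and the equality case by hand. One small point worth making explicit in your write-up: $d=b-k\geq 0$ is automatic because $d$ is assumed to lie in $\mathbb{N}\cup\{0\}$, so the substitution never produces a negative entry; as you have it, this is implicitly used but not stated.
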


	\begin{proof}
		The inequality holds if and only if $ \binom{c}{2} - \binom{a}{2}  \geq \binom{b}{2} - \binom{d}{2} $. As $ \binom{i}{2} =\sum \limits _{j=1}^{i-1}j$, the lemma holds if and only if $\sum \limits _{j=a}^{c-1}j \geq \sum \limits _{j=d}^{b-1}j$. The number of summands  on both sides are equal as $c-1-(a-1)=c-a=b-d=b-1-(d-1)$. Then the inequality is true if $a \geq d$ which holds as $a-d=c-b \geq 0$. If $c=a$, the equality follows trivially. So suppose that $c>a$. As the number of summands on both sides are equal, it is enough to see that $a > d$ to show $\sum \limits _{j=a}^{c-1}j > \sum \limits _{j=d}^{b-1}j$. But $ a > d$ holds as $a-d=c-b$ and $c > a \geq b$. This completes the lemma.
	\end{proof}
	
	The next lemma will be needed to prove Theorem $\ref{maxgraphcharaterize}$:
	\begin{lemma} \label{connectedincomplete}
		Let $\mathcal{G}$ be a connected graph. If $\mathcal{G}$ is not complete, then there exists vertices $x,y,z$ such that there are edges $xy$,$yz$, but no edge $xz$.
	\end{lemma}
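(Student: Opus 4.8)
The plan is to exploit connectedness to manufacture a short path between a non-adjacent pair and then read off the required triple from the first three vertices of that path.

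First I would use the hypothesis that $\mathcal{G}$ is not complete: this gives two distinct vertices $u$ and $w$ with no edge $uw$. Since $\mathcal{G}$ is connected, there is at least one path joining $u$ and $w$, and among all such paths I would fix one of minimum length, say $u = v_0, v_1, \ldots, v_k = w$. Being a shortest path, it is simple, so its vertices are pairwise distinct. Because $u$ and $w$ are non-adjacent, no path between them can have length $1$; hence $k \geq 2$, and in particular the three distinct vertices $v_0, v_1, v_2$ exist.

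Next I would set $x := v_0$, $y := v_1$, $z := v_2$. By construction $xy = v_0 v_1$ and $yz = v_1 v_2$ are edges of $\mathcal{G}$, so it only remains to verify that $xz = v_0 v_2$ is not an edge. If it were, then replacing the length-two subpath $v_0, v_1, v_2$ by the single edge $v_0 v_2$ would produce a strictly shorter path from $u$ to $w$, contradicting the minimality of $k$. Therefore $xz$ is not an edge, and $x, y, z$ is exactly the triple the lemma demands.

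The argument is entirely elementary and I do not anticipate a genuine obstacle; the only points requiring care are purely bookkeeping, namely that a shortest path between a non-adjacent pair has length at least $2$ (which is precisely what non-adjacency forces) and that the three chosen vertices are distinct (automatic along a simple shortest path). An alternative but essentially equivalent route would be to choose, among all non-adjacent pairs, one at minimum distance and argue that this distance must equal $2$; I would prefer the shortest-path formulation above since it directly exhibits the vertices $x,y,z$ without an auxiliary minimization over pairs.
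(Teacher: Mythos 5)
Your proof is correct. It differs in mechanism from the paper's: the paper argues by contradiction, assuming no such triple exists (so that adjacency of $ab$ and $bc$ always forces $ac$), then takes an \emph{arbitrary} path from a non-adjacent pair $u,w$ and inductively propagates edges $uv_1, uv_2, \ldots$ along it until it manufactures the forbidden edge $uw$. You instead make an extremal choice --- a \emph{shortest} path between the non-adjacent pair --- and read the triple $x,y,z$ directly off its first three vertices, with minimality ruling out the chord $v_0v_2$ in a single step. The two arguments rest on the same underlying idea (connectedness supplies a path between a non-adjacent pair, and somewhere along it transitivity of adjacency must fail), but yours is direct and constructive, replacing the paper's induction with one appeal to minimality, and it explicitly locates where the failure occurs; the paper's version, conversely, needs no minimization and works with whatever path connectedness hands it, at the cost of the inductive bookkeeping. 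Your handling of the two delicate points --- that non-adjacency forces path length $k \geq 2$, and that the vertices of a shortest path are pairwise distinct --- is exactly right, so there is no gap.
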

	
	\begin{proof}
		We prove the result by contradiction. Suppose there is a connected graph, $\mathcal{G}$, which is not complete and has no such desired vertices. Then if there are edges $ab$ and $bc$, there is an edge $ac$. As $\mathcal{G}$ is not complete, there exists vertices $u$ and $w$ such that $u$ and $w$ are not adjacent to each other. As $\mathcal{G}$ is connected, there exists a path from $u$ to $w$ with alternating sequence of vertices and edges given by $uv_1v_2v_3\ldots v_nw$. As edges $uv_1$ and $v_1v_2$ exist, there must be an edge $uv_2$. Inductively, $v_3$ and $u$ are adjacent and so on to obtain an edge $uv_n$. As the edge $v_nw$ exists, we obtain the edge $uw$ which is a contradiction.
	\end{proof}
	
	The following definition \cite[p.183]{GroYel2006} gives us a method to modify a graph by identifying two vertices. This will be useful in showing that certain graphs with $n$ edges do not have maximal number of triangles.
	
	\begin{definition} \label{contraction}
		Let $\mathcal{H}$ be a subgraph of a graph $\mathcal{G}$. The contraction of $\mathcal{H}$ to a vertex is the replacement of $\mathcal{H}$ by a single vertex $k$. Each edge that joined a vertex $v \in V_{\mathcal{G}}-V_{\mathcal{H}}$ to a vertex in $\mathcal{H}$ is replaced by an edge with endpoints $v$ and $k$.
	\end{definition}
	
	\begin{Remark}
		As a particular case of the above definition, we define contracting a pair of vertices $v_1$ and $v_2$ to give a vertex $v$ as the contraction of the subgraph with vertices $v_1$ and $v_2$ and the edge connecting them (if any) by the vertex $v$.
	\end{Remark}
	
	The next theorem provides a precise answer to Question $\ref{q2}$:
	
	\begin{theorem} \label{trianglebound}
		Let $\binom{r}{2} \leq n < \binom{r+1}{2}$ and set $t:=n- \binom{r}{2}$. Then the number of triangles for any graph $\mathcal{G}$ with $n$ edges is less than or equal to $ \binom{r}{3} + \binom{t}{2}$. Furthermore there exists graphs that achieve the bound for each $n$.
	\end{theorem}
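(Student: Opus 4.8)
The plan is to prove the two assertions separately: the sharpness (``furthermore'') part by an explicit construction, and the upper bound by induction on $n$ driven by the convexity estimate of Lemma~\ref{choose2bound}.

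For sharpness I would exhibit the graph $\mathcal{G}_0$ obtained from the complete graph $K_r$ by adjoining one new vertex $v$ and joining it to exactly $t$ of the $r$ vertices of $K_r$; this is legitimate since $t = n - \binom{r}{2} \le r-1$. It has $\binom{r}{2} + t = n$ edges. As any two vertices of $K_r$ are adjacent, each of the $\binom{t}{2}$ pairs among the neighbours of $v$ completes a triangle through $v$, while $K_r$ itself contributes $\binom{r}{3}$ triangles. Hence $\mathcal{G}_0$ realises $\binom{r}{3} + \binom{t}{2}$ triangles, meeting the bound.

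For the upper bound I would induct on $n$, working with an arbitrary graph $\mathcal{G}$ with $n$ edges (no maximality assumption is needed). After discarding isolated vertices I split on the number of vertices $V$. If $V \le r$, then $n \le \binom{V}{2} \le \binom{r}{2} \le n$ forces $V = r$, $t = 0$ and $\mathcal{G} = K_r$, which has exactly $\binom{r}{3}$ triangles. If $V \ge r+1$, an averaging argument controls the minimum degree: since $\sum_v d(v) = 2n = r(r-1) + 2t$ with $t \le r-1$, the average degree is strictly less than $r$, so some vertex $v$ has $1 \le \delta := d(v) \le r-1$. The triangles through $v$ are precisely the edges inside $N(v)$, of which there are at most $\binom{\delta}{2}$; deleting $v$ therefore leaves a graph on $n - \delta$ edges with at least $T(\mathcal{G}) - \binom{\delta}{2}$ triangles. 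Writing $f(n) := \binom{r}{3} + \binom{t}{2}$ for the claimed bound, the inductive step reduces to the numerical inequality $f(n-\delta) + \binom{\delta}{2} \le f(n)$ for $1 \le \delta \le r-1$.

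The heart of the argument is this inequality, and I expect the bookkeeping across two regimes to be the main obstacle. When $\delta \le t$, the parameter $r$ is unchanged for $n - \delta$ and the inequality collapses to $\binom{t-\delta}{2} + \binom{\delta}{2} \le \binom{t}{2}$, which is Lemma~\ref{choose2bound} applied to the pair summing to $t$, with $\{t,0\}$ the most unequal split. When $\delta > t$, the value $n - \delta$ drops into the range governed by $r-1$, with new parameter $t' = r-1-\delta+t$; invoking the identity $\binom{r}{3} = \binom{r-1}{3} + \binom{r-1}{2}$, the inequality becomes $\binom{r-1-\delta+t}{2} + \binom{\delta}{2} \le \binom{r-1}{2} + \binom{t}{2}$, where both pairs sum to $r-1+t$ and $r-1 = \max\{r-1,t\} \ge \max\{r-1-\delta+t,\delta\}$, so Lemma~\ref{choose2bound} again applies. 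The one point demanding care is checking, in each regime, that $n-\delta$ indeed lands in the interval $[\binom{r'}{2},\binom{r'+1}{2})$ for the appropriate $r'$, so that the inductive hypothesis is genuinely available; the bound $1 \le \delta \le r-1$ together with $t \le r-1$ is exactly what makes both verifications go through.
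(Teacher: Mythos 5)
Your proposal is correct and takes essentially the same route as the paper's proof: induction on $n$, locating a vertex $v$ with $1 \le d(v) \le r-1$ (your averaging argument is a cosmetic variant of the paper's counting contradiction), deleting it, and closing the two regimes $d(v) \le t$ and $d(v) > t$ via Lemma~\ref{choose2bound} together with $\binom{r}{3} = \binom{r-1}{3} + \binom{r-1}{2}$, with the identical extremal construction ($K_r$ plus one vertex joined to $t$ of its vertices). Your explicit verification that $n - d(v)$ lands in the correct interval for the inductive hypothesis is a point the paper treats more tersely, but the arguments coincide.
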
	
	\begin{proof}
		The proof proceeds by induction on $n$. The result holds trivially for a graph with $1$ edge. Thus, we assume that the result holds for all numbers strictly less than n. We claim that the graph $\mathcal{G}$ with $n$ edges has at least one vertex $v$ with degree $1 \leq d(v) \leq r-1$. If all vertices had degree greater than or equal to $r$, then $\mathcal{G}$ will have at least $r+1$ vertices each with degree at least $r$. Thus $n \geq \frac{(r+1)r}{2}$, which is a contradiction as $n< \binom{r+1}{2} $. The number of triangles in $\mathcal{G}$ is the sum of number of triangles containing $v$ and the number of triangles not containing $v$. The number of triangles containing $v$ is less than or equal to $\frac{d(v)(d(v)-1)}{2}$. We will obtain the number of triangles not containing  $v$ by applying the induction hypothesis. Let $\mathcal{G}'$ denote the graph obtained by removing the vertex $v$ and all edges adjacent to $v$. Then the number of triangles not containing $v$ is equal to the number of triangles in $\mathcal{G}'$. As the number of edges of $\mathcal{G}'$ is $n-d(v)$, we can apply the induction hypothesis on $\mathcal{G}'$. If $t \geq d(v)$, then $ \binom{r}{2} \leq n-d(v) < \binom{r+1}{2} $ and by the induction hypothesis, the number of triangles of $\mathcal{G}'$ is less than or equal to $ \binom{r}{3} + \binom{t-d(v)}{2} $. Therefore, the number of triangles in $\mathcal{G}$ is $ \binom{r}{3} + \binom{t-d(v)}{2} + \binom{d(v)}{2}  \leq \binom{r}{3} + \binom{t}{2} $ by Lemma \ref{choose2bound}. If $t < d(v)$, then $ \binom{r-1}{2} \leq n-d(v) < \binom{r}{2} $. By the induction hypothesis, the number of triangles in $\mathcal{G}'$ is less than or equal to $ \binom{r-1}{3} + \binom{r-1+t-d(v)}{2} $. Therefore, the number of triangles in $\mathcal{G}$ is $ \binom{r-1}{3} + \binom{r-1+t-d(v)}{2} + \binom{d(v)}{2}  \leq \binom{r-1}{3} + \binom{r-1}{2} + \binom{t}{2} = \binom{r}{3} + \binom{t}{2} $ where the inequality holds by the Lemma \ref{choose2bound}. This completes the induction. To see that the bound is achieved, consider the following graph: Let $K_r$ be a complete graph. Attach to this graph a vertex with $t$ edges to $t$ vertices of the $K_r$ graph. This is a graph with $ \binom{r}{2}+t=n$ edges.  The $K_r$ graph has $ \binom{r}{3} $ triangles. The extra vertex contributes a triangle for every pair of the $t$ vertices it is attached to. This gives $ \binom{t}{2} $ triangles more and thus the graph has at least $ \binom{r}{3} + \binom{t}{2} $ triangles.
	\end{proof}
	
	Now that we have obtained the bound on the number of triangles in a graph with fixed number of edges, we shift our attention to answering Question $\ref{q3}$. We require the following proposition.
	
	\begin{prop}\label{disconnectedmaximalgraph}
		Let $\mathcal{G}$ be a graph with $n$ edges and maximum number of triangles. If $\mathcal{G}$ is disconnected, then $n = \binom{r}{2} +1$. Moreover for graphs with $n= \binom{r}{2} + 1$ edges and maximal number of triangles, the graph can be connected or disconnected. 
	\end{prop}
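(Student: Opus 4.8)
The plan is to reduce the disconnected case to a superadditivity statement for the maximum triangle count and then read off its equality case. Throughout write $M(n)=\binom{r}{3}+\binom{t}{2}$ for the maximum number of triangles on $n=\binom{r}{2}+t$ edges supplied by Theorem \ref{trianglebound}, and note that we may assume $\mathcal{G}$ has no isolated vertices, since these alter neither the edge count nor the triangle count; hence every connected component carries at least one edge. If $\mathcal{G}$ is disconnected, pick a component $A$ of minimum edge count $a\ge 1$ and let $B$ be the union of the remaining components, with $b=n-a\ge a$ edges. Triangles never span two components, so $T(\mathcal{G})=T(A)+T(B)$; and maximality of $\mathcal{G}$ forces both $A$ and $B$ to be triangle-maximal for their own edge counts (otherwise replace one of them by a triangle-maximal graph on the same number of edges, keeping the disjoint union, and strictly increase the total). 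Therefore $M(n)=M(a)+M(b)$.

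The key step I would isolate as a lemma is the superadditivity of $M$ with its equality case: for all $a,b\ge 1$ one has $M(a)+M(b)\le M(a+b)$, with equality if and only if $a+b=\binom{r}{2}+1$ for some $r$ (equivalently $t=1$). Granting this, the identity $M(n)=M(a)+M(b)$ immediately yields $n=\binom{r}{2}+1$, which is the first assertion. To prove the lemma I would work with the first difference $\delta(k):=M(k+1)-M(k)$; a short computation, splitting according to whether $t<r-1$ or $t=r-1$ and using $\binom{r+1}{3}-\binom{r}{3}=\binom{r}{2}$, shows $\delta(k)=k-\binom{r(k)}{2}$, where $r(k)$ is determined by $\binom{r(k)}{2}\le k<\binom{r(k)+1}{2}$; that is, $\delta(k)$ is the distance from $k$ down to the largest number of the form $\binom{r}{2}$ not exceeding it. Thus the sequence $\delta(0),\delta(1),\delta(2),\dots$ is the concatenation of the blocks $(0,1,\dots,r-1)$ for $r=1,2,3,\dots$. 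Telescoping rewrites $M(a+b)-M(a)-M(b)\ge 0$ (assuming $a\le b$ without loss) as the statement that the length-$a$ window $\bigl(\delta(b),\dots,\delta(a+b-1)\bigr)$ has sum at least that of the prefix $\bigl(\delta(0),\dots,\delta(a-1)\bigr)$, a window starting at position $b\ge a$.

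The technical heart, and the step I expect to be the main obstacle, is proving this window inequality together with its sharp equality case. Here I would use that every block met by a window starting at position $b\ge a$ has index at least $r(a-1)$, hence length at least that of any block met by the prefix, so that by Lemma \ref{choose2bound} the partial block sums $\binom{\cdot}{2}$ arising from the window dominate those of the prefix; the strictness comes from the observation that the only two adjacent zeros of $\delta$ sit at positions $0$ and $1$, so a window starting at position $b\ge a\ge 2$ contains at most one zero where the prefix contains two, forcing a strict gain. The remaining case $a=1$ is immediate, since the window sum is $\delta(b)=b-\binom{r(b)}{2}$, which equals the prefix sum $\delta(0)=0$ exactly when $b$ is of the form $\binom{r}{2}$, i.e. $n=\binom{r}{2}+1$. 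Finally, for the ``moreover'' statement I would exhibit, for each $r$, two extremal graphs on $n=\binom{r}{2}+1$ edges: the disconnected graph $K_2\cup K_r$, and the connected graph formed by attaching a single pendant edge to a vertex of $K_r$. Each has exactly $\binom{r}{3}=M(n)$ triangles, so a triangle-maximal graph with $n=\binom{r}{2}+1$ edges may be either connected or disconnected.
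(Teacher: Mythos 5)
Your high-level reduction is correct and genuinely different from the paper's argument: you decompose a disconnected extremal graph into a minimum component $A$ and the rest $B$, observe that triangles never cross components, use maximality to force each part to be extremal for its own edge count, and so arrive at $M(n)=M(a)+M(b)$, reducing the proposition to strict superadditivity of $M$ (with equality only when $\min(a,b)=1$ and the larger part is triangular). The paper instead works by direct graph surgery: contracting vertices across three components to manufacture a new triangle, invoking Lemma~\ref{connectedincomplete} to force each of two components to be complete, and then transplanting a vertex together with one edge from $K_m$ to $K_n$, gaining $(m-1)-(m-3)=2$ triangles via Theorem~\ref{rivintheorem}. Your ``moreover'' verification ($K_2\cup K_r$ versus $K_r$ with a pendant edge, each with $\binom{r}{3}$ triangles) coincides with the paper's. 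Your computation $\delta(k)=k-\binom{r(k)}{2}$ and the block structure of the difference sequence are also correct.

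However, the step you yourself flag as the technical heart is not established, in three respects. First, your lemma's equality case is false as a biconditional: $a=b=2$ gives $a+b=4=\binom{3}{2}+1$, yet $M(2)+M(2)=0<1=M(4)$; the true equality case is $\min(a,b)=1$ with $\max(a,b)$ triangular. You only use the forward direction, so this alone is repairable, but it shows the equality analysis has not been pinned down. Second, the window-versus-prefix comparison is asserted rather than proved: the window and the prefix can meet different numbers of blocks (the prefix of length $5$ meets blocks $1,2,3$, while the window starting at $b=6$ meets blocks $4$ and $5$), so Lemma~\ref{choose2bound}, which compares two pairs of binomials with equal totals, does not apply directly; one needs an actual majorization or induction argument here. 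Third, the strictness mechanism rests on a false claim: a window with $b\ge a\ge 2$ can contain two zeros --- take $a=5$, $b=6$, so the window covers positions $6,\dots,10$ with $\delta$-values $(0,1,2,3,0)$, zeros at both $6$ and $10$ --- and in any case a smaller count of zeros does not by itself force a strictly larger sum. The inequality you want is true (in that example the window sums to $6$ against the prefix's $2$), so the route is viable, but as written the key inequality and its strict form are unproven; the paper's surgery proof sidesteps this arithmetic entirely.
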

	
	\begin{proof}
		First we prove that if $\mathcal{G}$ is disconnected with at least three components, then it is not a graph with $n$ edges and maximal number of triangles. Let $v_iw_i$ denote an edge belonging to the $i^{th}$  component with $v_i$ and $w_i$ denoting the corresponding vertices. We contract vertices $v_1$ and $w_3$  to give the vertex $u_1$, contract vertices $v_2$ and $w_1$ to give the vertex $u_2$ and contract vertices $v_3$ and $w_2$ to give the vertex $u_3$ and let all other edges and vertices remain the same.  Then $u_1u_2u_3$ is a triangle present in the new graph that was not present in the previous graph. Note that all the other triangles are still retained. Thus we have produced a new graph with more number of triangles with the same number of edges which contradicts the maximality of triangles in the graph and hence $\mathcal{G}$ is a graph with at most two components with number of edges $a$ and $b$ respectively. If either component is not a complete graph, then it has three vertices $v_1,v_2,v_3$ such that there are edges $v_1v_2$ and $v_1v_3$, but no edge $v_2v_3$ by Lemma \ref{connectedincomplete}. Consider the edge $w_1w_2$ in the other component. Contracting vertices $v_2$ and $w_1$ to give the vertex $u_1$ and contracting vertices $v_3$ and $w_2$ to give the vertex $u_2$ forms a new triangle $v_1u_1u_2$ that was not in the previous graph contradicting the maximality of graph. Hence we may now assume that both the components are complete graphs. Thus the number of edges of both components are triangular numbers. If both components have more than one edge, then the components are given as $K_m$ and $K_n$ where $2 < m \leq n$. Remove a vertex $v$ and the $m-1$ edges adjacent to it in $K_m$ to get a $K_{m-1}$ graph and attach this vertex to $K_n$ in the following manner: $v$ is connected to $m-1$ vertices in $K_n$ by the $m-1$ edges. Note that now we have a new graph with the same number of edges. Observe that the number of triangles remain the same. To see this, note that the number of triangles lost from $K_m$ is $ \binom{m}{3} - \binom{m-1}{3} = \binom{m-1}{2}$ and the number of triangles added to $K_n$ is also $ \binom{m-1}{2} $ as $v$ has degree $m-1$ and the set of vertices adjacent to it form a complete graph. Now removing an edge from $K_{m-1}$ reduces the number of triangles by $m-3$ by Theorem \ref{rivintheorem}. Note that as $n>m-1$, there is a vertex in $K_n$ such that $v$ is not adjacent to it. The removed edge is attached to $v$ and this vertex in $K_n$ and this gives $m-1$ triangles as the attached edge can form a triangle with each of the previously added $m-1$ edges. Thus now we have a graph with the same number of edges but more number of triangles which contradicts the maximality of the graph we started with. So, if $\mathcal{G}$ is the graph with $n=a+b$ edges and maximal number of triangles with components with number of edges $a$ and $b$, then both $a$ and $b$ must be triangular numbers and one of them must be $1$. This completes the proof. Note that $K_2 \cup K_r$ and the complete $K_r$ graph with an extra vertex attached to one of the $r$ vertices both have $ \binom{r}{3} $ triangles. This shows that there are connected and disconnected graphs with $n= \binom{r}{2} +1$ edges and maximal number of triangles.
	\end{proof}
	
	With this, we now have all the ingredients to answer Question $\ref{q3}$. We prove:
	
	\begin{theorem} \label{maxgraphcharaterize}
		Let $ \binom{r}{2}  \leq n < \binom{r+1}{2} $ and $t:=n- \binom{r}{2}$. Let $\mathcal{G}$ be the graph with $n$ edges and maximum number of triangles.
		\begin{enumerate}
			\item If $t=0 $, then $\mathcal{G}$ is the complete graph $K_r$ \cite[Theorem 2]{Rivi2002}
			
			\item If $t=1$ and $\mathcal{G}$ is connected, then $\mathcal{G}$ is the graph obtained by attaching a vertex $v$ to a vertex of the complete graph $K_r$ with $1$ edge.
			
			\item If $t=1$ and $\mathcal{G}$ is disconnected, then $\mathcal{G}$ is the graph $K_2 \cup K_r$.
			
			\item If $t \neq 0,1$, then $\mathcal{G}$ is the graph obtained by attaching a vertex $v$ with $t$ edges to the complete graph $K_r$, where the $t$ edges incident to $t$ vertices of $K_r$.
			
		\end{enumerate}
	\end{theorem}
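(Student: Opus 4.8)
The plan is to dispose of the two degenerate cases immediately and then concentrate all the work on the connected case via induction. Case $(1)$ ($t=0$) is precisely Rivin's uniqueness statement, Theorem \ref{rivintheorem}. Case $(3)$ follows from Proposition \ref{disconnectedmaximalgraph}: a disconnected maximal graph forces $t=1$ and, as shown there, splits as a union of two complete graphs one of which is $K_2$; since the other must then carry $\binom{r}{2}$ edges it is $K_r$, giving $\mathcal{G}=K_2\cup K_r$. Thus the substance is the connected case, i.e. cases $(2)$ and $(4)$, which I would treat uniformly for $t\ge 1$ by strong induction on $n$, extracting the equality conditions from the proof of Theorem \ref{trianglebound}. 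Since by Proposition \ref{disconnectedmaximalgraph} every maximal graph with $t\ge 2$ is automatically connected, this simultaneously settles case $(4)$.

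The crucial preliminary step is to constrain the degree of the vertex we delete. Let $\mathcal{G}$ be connected with $n=\binom{r}{2}+t$ edges and the maximal number $\binom{r}{3}+\binom{t}{2}$ of triangles, and let $v$ be a vertex with $1\le d(v)\le r-1$, which exists by the counting argument in the proof of Theorem \ref{trianglebound}. Splitting the triangles of $\mathcal{G}$ into those through $v$ (at most $\binom{d(v)}{2}$, with equality exactly when the neighbourhood of $v$ is a clique) and those of $\mathcal{G}':=\mathcal{G}-v$, maximality forces equality in every inequality used in the proof of Theorem \ref{trianglebound}. In particular the application of Lemma \ref{choose2bound} is tight, and inspecting its equality clause $c=a$ in the two regimes $d(v)\le t$ and $d(v)>t$ forces $d(v)=t$ in the first regime and $d(v)=r-1$ in the second. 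Hence every removable vertex of a connected maximal graph has degree exactly $t$ or exactly $r-1$, its neighbourhood is a clique, and $\mathcal{G}'$ is itself a maximal graph (on $\binom{r}{2}$ edges in the first case, on $\binom{r-1}{2}+t$ edges in the second).

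The induction is then completed by case distinction on whether a degree-$t$ vertex is available. If some removable vertex $v$ has degree $t$, then $\mathcal{G}-v$ is maximal on $\binom{r}{2}$ edges, so $\mathcal{G}-v=K_r$ by case $(1)$; as $v$ is joined to $t$ vertices of $K_r$, which automatically form a clique, we obtain exactly the graph of case $(4)$ (or of case $(2)$ when $t=1$). Otherwise every removable vertex has degree $r-1$, which forces $t\le r-2$; deleting one such $v$, its neighbourhood is an $(r-1)$-clique of the maximal graph $\mathcal{G}'$ on $\binom{r-1}{2}+t$ edges. Connectivity of $\mathcal{G}$ rules out the disconnected description of $\mathcal{G}'$ (attaching $v$ to the $K_{r-1}$ inside $K_2\cup K_{r-1}$ would leave $\mathcal{G}$ disconnected), so the inductive hypothesis makes $\mathcal{G}'$ a copy of $K_{r-1}$ with one extra vertex $w$ joined to $t$ of its vertices. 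It remains to determine which $(r-1)$-cliques of this $\mathcal{G}'$ can serve as the neighbourhood of $v$ and to check that each reconstitutes the target graph.

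I expect this last bookkeeping to be the main obstacle. Generically the unique $(r-1)$-clique of $\mathcal{G}'$ is the $K_{r-1}$ itself, so $v$ together with it forms a $K_r$ and $w$ is the degree-$t$ vertex attached to it, yielding case $(4)$ at level $r$. The delicate point is the borderline $t=r-2$, where $w$ (now of degree $r-2$) together with $r-2$ vertices of $K_{r-1}$ provides a second $(r-1)$-clique; here one verifies by a direct relabelling that attaching $v$ to this clique still produces $K_r$ plus a single vertex of degree $t$, with the vertex of $K_{r-1}$ omitted from the clique (equivalently, the non-neighbour of $w$) playing that role. The very small cases $r\le 3$, where these clique counts degenerate, I would simply check by hand as the base of the induction.
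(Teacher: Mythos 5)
Your proposal is correct, and its skeleton is the same as the paper's: induct on $n$, take a vertex of degree between $1$ and $r-1$, split the triangles through and avoiding it, and use the equality clause of Lemma~\ref{choose2bound} to force that vertex's degree to be exactly $t$ or exactly $r-1$, after which Rivin's uniqueness statement (Theorem~\ref{rivintheorem}) identifies the deleted graph in the degree-$t$ branch and the induction hypothesis identifies it in the degree-$(r-1)$ branch. Two differences deserve comment. First, you quarantine disconnectedness at the outset via Proposition~\ref{disconnectedmaximalgraph} — strictly speaking via its \emph{proof}, since the stated proposition only asserts $n=\binom{r}{2}+1$ while the ``two complete components, one a $K_2$'' conclusion lives inside the argument — whereas the paper interleaves the disconnected possibilities inside its Cases 2 and 5; your arrangement is cleaner and lets connectivity of $\mathcal{G}$ kill the disconnected outcome of the induction hypothesis in one line. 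Second, and more substantively, your explicit handling of the borderline $t=r-2$ repairs a genuine lacuna in the paper's Case 5. In the paper's notation ($w$ deleted, $v$ the degree-$t$ vertex of $\mathcal{G}'$ — the roles are swapped relative to yours), the paper claims a contradiction whenever $v\in N(w)$ by noting $v$ then has degree at least $r-1$ in $\mathcal{G}$ while the induction hypothesis gives it degree $t$ in $\mathcal{G}'$; since $v$ gains exactly one edge on passing from $\mathcal{G}'$ to $\mathcal{G}$, this comparison only yields $t+1\geq r-1$, i.e.\ $t\geq r-2$, which together with the Case 5 hypothesis $t\leq r-2$ is \emph{consistent} at $t=r-2$ rather than contradictory. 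In that sub-case the configuration really occurs — $N(w)$ is the second $(r-1)$-clique formed by $v$ and its $r-2$ neighbours — and, exactly as you verify by relabelling, the resulting graph is the $K_r$ on $w$, $v$ and those neighbours, with the vertex of $K_{r-1}$ not adjacent to $v$ playing the attached vertex of degree $t$; so the theorem's conclusion survives, but only because the exceptional configuration is isomorphic to the extremal graph, not because it is impossible. With your hand-checked bases $r\leq 3$, your write-up is therefore the more complete version of the paper's own argument.
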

	\begin{proof}
		The proof proceeds by induction. The base case $n=1$ is trivial. So assume that the result is true for all numbers strictly less than $n$. Let $\mathcal{G}$ be a graph with $n$ edges and maximal number of triangles. If $n= \binom{r}{2} $, then $\mathcal{G}$ is the complete $K_r$ graph by Theorem \ref{rivintheorem}. Thus we can assume that $n$ is not a triangular number and therefore $t \neq 0$. As in Theorem \ref{trianglebound}, there exists a vertex $w$ with degree $1 \leq d(w) \leq r-1$ for a graph with $n$ edges. The number of triangles in $\mathcal{G}$ is the sum of the number of triangles containing $w$ and the number of triangles not containing $w$. Let $\mathcal{G}'$ denote the graph obtained by removing the vertex $w$ and all edges adjacent to $w$. Then the number of triangles not containing $w$ is equal to the number of triangles in $\mathcal{G}'$. 
		
		\textbf{Case 1:} $d(w)=t \neq 1$.
		
		The number of triangles containing $w$ is at most $ \binom{t}{2} $. As $\mathcal{G}'$ is a graph with $n-d(w)={r \choose 2}$ edges, the number of triangles in $\mathcal{G}'$ is at most ${r \choose 3}$ by Theorem \ref{trianglebound}. Thus, the number of triangles in $\mathcal{G}$ is at most ${r \choose 3}+{t \choose 2}$. Since $\mathcal{G}$ is a graph with maximum number of traingles and the upper bound in Theorem \ref{trianglebound} is attained, the number of triangles containing $w$ is ${t \choose 2}$ and the number of triangles in $\mathcal{G}'$ is ${r \choose 3}$. Thus the graph $\mathcal{G}'$ is a complete $K_r$ graph by Theorem \ref{rivintheorem}. Since the number of triangles containing $w$ is ${t \choose 2}$, every pair of vertices adjacent to $w$ must be connected by an edge. These edges belong to the $K_r$ graph because if it does not belong to the $K_r$ graph, the number of edges in $\mathcal{G}$ will exceed $n$. Thus $w$ is connected to $t$ vertices of the $K_r$ graph. This gives the graph described in part $4$.
		
		\textbf{Case 2:} $d(w)=t = 1$.

		The number of triangles containing $w$ is $0$. As $\mathcal{G}'$ is a graph with $n-d(w)={r \choose 2}$ edges, the number of triangles in $\mathcal{G}'$ is at most ${r \choose 3}$ by Theorem \ref{trianglebound}. Thus, the number of triangles in $\mathcal{G}$ is at most ${r \choose 3}$. Since $\mathcal{G}$ is a graph with maximum number of traingles and the upper bound in Theorem \ref{trianglebound} is attained, the number of triangles in $\mathcal{G}'$ is ${r \choose 3}$. Thus the graph $\mathcal{G}'$ is a complete $K_r$ graph by Theorem \ref{rivintheorem}. Now $w$ can be connected to the $K_r$ graph or disconnected from it. If the edge is connected to a vertex of the $K_r$ graph then we get the graph described in part $2$. If $w$ is disconnected from the $K_r$ graph, $w$ must  belong to a $K_2$ graph for it to have an edge adjacent to it. Then we get the graph $K_r \cup K_2$ described in part $3$.
		
		\textbf{Case 3:} $d(w) < t$.
		
		The number of triangles containing $w$ is at most ${d(w) \choose 2}$. As $\mathcal{G}'$ is a graph with $n-d(w)={r \choose 2}+t-d(w)$ edges, the number of triangles in $\mathcal{G}'$ is at most ${r \choose 3}+{t-d(w) \choose 2}$ by Theorem \ref{trianglebound}. Thus, the number of triangles in $\mathcal{G}$ is at most ${r \choose 3}+{t-d(w) \choose 2}+{d(w) \choose 2} < {r \choose 3}+{t \choose 2}$ by Lemma \ref{choose2bound}. This is a contradiction as a graph with $n$ edges and maximal number of triangles must have ${r \choose 3}+{t \choose 2}$ triangles. 
		
		\textbf{Case 4:} $t < d(w) < r-1$.
		
		The number of triangles containing $w$ is at most ${d(w) \choose 2}$. As $\mathcal{G}'$ is a graph with $n-d(w)={r-1 \choose 2}+t-d(w)+r-1$ edges, the number of triangles in $\mathcal{G}'$ is at most ${r-1 \choose 3}+{t-d(w)+r-1 \choose 2}$ by Theorem \ref{trianglebound}. Thus, the number of triangles in $\mathcal{G}$ is at most ${r-1 \choose 3}+{t-d(w)+r-1 \choose 2}+{d(w) \choose 2} < {r-1 \choose 3}+{r-1 \choose 2}+{t \choose 2}={r \choose 3}+{t \choose 2}$ where the inequality holds by the Lemma \ref{choose2bound}. This is a contradiction as a graph with $n$ edges and maximal number of triangles must have ${r \choose 3}+{t \choose 2}$ triangles. 
		
		\textbf{Case 5:} $t < d(w) = r-1$.
		
		The number of triangles containing $w$ is at most ${r-1 \choose 2}$. As $\mathcal{G}'$ is a graph with $n-d(w)={r-1 \choose 2}+t-d(w)+r-1={r-1 \choose 2}+t$ edges, the number of triangles in $\mathcal{G}'$ is at most ${r-1 \choose 3}+{t \choose 2}$. Thus, the number of triangles in $\mathcal{G}$ is at most ${r-1 \choose 3}+{t \choose 2}+{r-1 \choose 2} ={r \choose 3}+{t \choose 2}$. As $\mathcal{G}$ is the graph with maximum number of traingles and the upper bound in Theorem \ref{trianglebound} is attained, the number of triangles containing $w$ is ${r-1 \choose 2}$ and the number of triangles in $\mathcal{G}'$ is ${r-1 \choose 3}+{t \choose 2}$. Thus $\mathcal{G}'$ is a graph with maximal number of triangles. Since $\mathcal{G}'$ has ${r-1 \choose 2}+t$ edges, we can apply the induction hypothesis to $\mathcal{G}'$. If $\mathcal{G}'$ is connected, it is a complete $K_{r-1}$ graph with a vertex $v$ such that $v$ has $t$ edges adjacent to $t$ vertices of $K_{r-1}$. Since $w$ is contained in ${r-1 \choose 2}$ triangles, every pair of the $r-1$ vertices adjacent to $w$ must be connected by an edge and thus the vertices adjacent to $w$ form a complete $K_{r-1}$ graph. If $v$ belongs to the set of $r-1$ vertices adjacent to $w$, then $v$ has degree at least $r-2$ as it belongs to the $K_{r-1}$ graph. Including the edge connecting $v$ to $w$, $v$ has degree at least $r-1$. But according to the induction hypothesis $v$ has degree $t<r-1$ and therefore we have a contradiction. Thus $w$ is connected to all the vertices of the $K_{r-1}$ graph. This gives the graph described in part $2$ or the one described in part $4$. On the other hand, if $\mathcal{G}'$ is disconnected, then $\mathcal{G}'=K_{r-1} \cup K_2$. If $w$ is connected to vertices in $K_{r-1}$ and $K_2$, then those vertices must be adjacent for $w$ to have ${r-1 \choose 2}$ triangles. This is a contradiction as those vertices belong to two distinct components. If $w$ is adjacent to vertices entirely in $K_{r-1}$, then we obtain the graph $K_{r} \cup K_2$ described in part $3$. If $w$ is adjacent to $r-1$ vertices in $K_2$, then $r-1 \leq 2$. If $r-1=1$, then $r=2$, in which case $K_{r-1}=K_1$ and this component can be ignored, resulting in the graph described in part $2$. If $r-1=2$, then we have $w$ adjacent to vertices of a $K_2$ graph in $K_2 \cup K_2$ which gives $K_3 \cup K_2$ giving the graph described in part $3$. This completes the proof.
		
	\end{proof}
	
	\section{Bounds on order of Schur multiplier for special $p$-groups}
	A finite $p$-group $G$ is called a special $p$-group of rank $k$ if $\gamma_2G=Z(G)=\Phi(G)$ is elementary abelian of order $p^k$. Special $p$-groups of rank $1$ are called extraspecial. The Schur multiplier of a group $G$ is the second homology group $H_2(G,\mathbb{Z})$ of $G$, where the action of $G$ on $\mathbb{Z}$ is trivial. The author of \cite{Mill1952} proved that $H_2(G,\mathbb{Z})$ is isomorphic to $\mathcal{M}(G)$ where $\mathcal{M}(G)= \text{ker } (\kappa: G \wedge G \to \gamma_2G)$.  The Schur multiplier of extraspceial $p$-groups has been studied in \cite[Corollary 4.16, p223]{BeyTap1982}. In \cite{Hatu2020}, Hatui has given a complete classification of special $p$-groups of rank $2$ with respect to  the Schur multiplier along with upper bounds for the size of the Schur multiplier. In \cite[Theorem 1.1]{Rai2018}, Rai has given an upper bound for the size of the Schur multiplier of special $p$-groups of maximal possible rank $d \choose 2$ when $d \geq 3$ where $d=d(G)$. In this section we give bounds for the size of the Schur multiplier of special $p$-groups of all ranks $k$, $2 \leq k \leq {d \choose 2}$, which generalizes bounds given in \cite[Theorem 1.1]{Rai2018} and \cite{Hatu2020}. Consider the homomorphism defined in \cite[Proposition 1]{EllWie1999}:
	\begin{align*}
		\Psi_2 & : G^{ab} \otimes G^{ab} \otimes G^{ab} \to \gamma_2G/\gamma_3G \otimes G^{ab} \\
		\bar{x} \otimes \bar{y} \otimes \bar{z} & \mapsto \overline{[x,y]} \otimes \bar{z}+ \overline{[y,z]} \otimes \bar{x}+\overline{[z,x]} \otimes \bar{y} \\
	\end{align*}

	A more general version of the above homomorphism was considered in \cite{Elli1998} above Theorem 3. Note that for special $p$-groups, $\gamma_2G \otimes G^{ab}$ is the usual tensor product of vector spaces over $\mathbb{F}_p$. For a basis $B$ of $\gamma_2G$, we have the decomposition:
	$\gamma_2G \otimes G^{ab}=\bigoplus\limits_{[x_i,x_j] \in B} \langle [x_i,x_j] \rangle \otimes G^{ab}=\bigoplus\limits_{[x_i,x_j] \in B} \bigoplus\limits _{k=1}^d \langle [x_i,x_j] \rangle \otimes \langle x_k \rangle$. Let $P_{ijk}: \gamma_2G \otimes G \to \langle[x_i,x_j ] \rangle \otimes \langle x_k \rangle$  and $P_{ij}: \gamma_2G \otimes G \to \langle[x_i,x_j ] \rangle \otimes G^{ab}$  be the natural projection maps. The next proposition appears in \cite[Proposition 5(i)]{Elli1998} and will serve as our main tool.
	
	\begin{prop}\label{a2prop}[G. Ellis]
		Let $G$ be any $d$-generator group of order $p^n$. Set \[a_i=\text{dim}_{\mathbb{F}_p}(\text{Im}(\Psi_i)+\text{Im}(\mathfrak{s}_i)+\text{Im}(\mathfrak{t}_i))\] and set $a=a_2+a_3+\ldots+a_c$ where $\gamma_{c+1}G=1$. Then 
		\[|\mathcal{M}(G)||\gamma_2G| \leq p^{\frac{d(2n-d-1)}{2}-a}\]
	\end{prop}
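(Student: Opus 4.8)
The plan is to rewrite the left-hand side as the order of a single group and then estimate that order by a dimension count over $\mathbb{F}_p$. Recall that the commutator map $\kappa\colon G\wedge G\to\gamma_2G$, $g\wedge h\mapsto[g,h]$, is surjective with kernel $\mathcal{M}(G)$, so the exact sequence $1\to\mathcal{M}(G)\to G\wedge G\xrightarrow{\kappa}\gamma_2G\to1$ gives $|\mathcal{M}(G)||\gamma_2G|=|G\wedge G|$. Thus it suffices to prove $|G\wedge G|\le p^{\frac{d(2n-d-1)}{2}-a}$. Since $G$ is a finite $p$-group, so is $G\wedge G$, and I would compute $\log_p|G\wedge G|$ as the total dimension of the associated graded of a central filtration on $G\wedge G$ induced by the lower central series of $G$.

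Writing $L_i=\gamma_iG/\gamma_{i+1}G$, I assign to a simple wedge $u\wedge v$ with $u\in\gamma_iG$ and $v\in\gamma_jG$ the filtration degree $i+j$. The crossed-module identities $gg'\wedge h=({}^{g}g'\wedge{}^{g}h)(g\wedge h)$ and $g\wedge hh'=(g\wedge h)({}^{h}g\wedge{}^{h}h')$, together with $G=\langle x_1,\dots,x_d\rangle$, allow me to rewrite every generator of the associated graded so that at least one slot carries one of the $d$ generators; modulo higher degree the conjugations act trivially. This exhibits $\mathrm{gr}(G\wedge G)$ as a quotient of the free model
\[
\Lambda^2(G^{ab})\ \oplus\ \bigoplus_{i\ge2}\bigl(G^{ab}\otimes L_i\bigr),
\]
with $\Lambda^2(G^{ab})$ in degree $2$ and $G^{ab}\otimes L_i$ in degree $i+1$. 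Counting with the $d$ minimal generators, its dimension is $\binom{d}{2}+d\sum_{i\ge2}\log_p|L_i|=\binom{d}{2}+d(n-d)=\frac{d(2n-d-1)}{2}$, which is exactly the claimed free bound before further relations are imposed.

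It remains to account, degree by degree, for the kernel of the surjection from the free model onto $\mathrm{gr}(G\wedge G)$. In degree $i$ this kernel is generated by three families of relations: the Jacobi relation $\overline{[x,y]}\otimes\bar z+\overline{[y,z]}\otimes\bar x+\overline{[z,x]}\otimes\bar y$, i.e. $\mathrm{Im}(\Psi_i)$ — the case $i=2$ being precisely the map displayed before the statement — together with the two power relations $\mathrm{Im}(\mathfrak{s}_i)$ and $\mathrm{Im}(\mathfrak{t}_i)$ coming from expanding $[x^p,y]$ and the $p$-th power map on the lower central factors. Hence the degree-$i$ layer loses $a_i=\dim_{\mathbb{F}_p}(\mathrm{Im}(\Psi_i)+\mathrm{Im}(\mathfrak{s}_i)+\mathrm{Im}(\mathfrak{t}_i))$ dimensions, and summing over $2\le i\le c$ gives $\log_p|G\wedge G|\le\frac{d(2n-d-1)}{2}-a$, which is the assertion.

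The main obstacle is steps two and three taken together: proving that the free model genuinely surjects onto the associated graded, and — the delicate point — that in each degree its kernel is generated by precisely $\mathrm{Im}(\Psi_i)+\mathrm{Im}(\mathfrak{s}_i)+\mathrm{Im}(\mathfrak{t}_i)$ and nothing finer. This is a bookkeeping exercise in the crossed-module calculus of $G\wedge G$, and the genuinely hard part is controlling the interaction between $p$-th powers and commutators, that is, defining $\mathfrak{s}_i$ and $\mathfrak{t}_i$ correctly and reconciling $d=d(G)$ with $\log_p|G^{ab}|$ when $G^{ab}$ is not elementary abelian. For a special $p$-group, where $G^{ab}$ is elementary abelian and $\gamma_2G=Z(G)=\Phi(G)$, the filtration has only two nontrivial layers and the power maps degenerate, so this subtlety disappears and the estimate specializes cleanly for the proof of Theorem \ref{specialbound}.
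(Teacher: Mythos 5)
The paper does not actually prove this proposition: it is quoted verbatim from Ellis \cite[Proposition 5(i)]{Elli1998}, so the only fair baseline is Ellis's original argument, and your skeleton --- the reduction $|\mathcal{M}(G)||\gamma_2G|=|G\wedge G|$ via Miller's theorem and a lower-central-series filtration of $G\wedge G$ whose graded pieces are quotients of $G^{ab}\wedge G^{ab}$ and $L_i\otimes G^{ab}$ --- is indeed that argument's skeleton. But two gaps in your write-up are genuine. First, your dimension count of the ``free model'' is wrong unless $G^{ab}$ is elementary abelian: the layers are not $\mathbb{F}_p$-vector spaces, and if $G^{ab}\cong C_{p^{\alpha_1}}\times\cdots\times C_{p^{\alpha_d}}$ then $G^{ab}\wedge G^{ab}$ has order $p^{\sum_{i<j}\min(\alpha_i,\alpha_j)}$, which already for $G^{ab}\cong C_{p^2}\times C_{p^2}$ is $p^2$, exceeding your claimed $p^{\binom{2}{2}}=p$; correspondingly $\sum_{i\geq 2}\log_p|L_i|=n-m$ with $m=\log_p|G^{ab}|\geq d$, not $n-d$. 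These two discrepancies pull in opposite directions and the final bound survives, but only via an inequality you never state, e.g. $\log_p|G^{ab}\wedge G^{ab}|\leq\binom{d}{2}+(d-1)(m-d)$ together with $m\geq d$, which gives a total of at most $\binom{d}{2}+d(n-d)=\frac{d(2n-d-1)}{2}$. You flag this as the issue of ``reconciling $d$ with $\log_p|G^{ab}|$'' but defer it; it is not cosmetic, since without it your asserted equality for the free model's size is simply false in general.

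Second, your formulation of the remaining task is backwards and then deferred. For an \emph{upper} bound you need only the containment that $\mathrm{Im}(\Psi_i)+\mathrm{Im}(\mathfrak{s}_i)+\mathrm{Im}(\mathfrak{t}_i)$ lies in the kernel of the surjection from the degree-$i$ free layer onto the corresponding graded quotient of $G\wedge G$ --- i.e., that Hall--Witt-type identities in the exterior square kill the $\Psi_i$-images, and that the power relations kill the $\mathfrak{s}_i$- and $\mathfrak{t}_i$-images. Demanding that the kernel be ``generated by precisely these and nothing finer'' is neither needed for the inequality nor true in general. The trouble is that this containment, together with the actual definitions of $\mathfrak{s}_i$ and $\mathfrak{t}_i$ (which you never give) and the verification that the deducted quantity $a_i$ is a genuine $\mathbb{F}_p$-dimension inside the relevant layer, is the entire mathematical content of Ellis's proposition; labelling it ``a bookkeeping exercise in the crossed-module calculus'' leaves your proposal as a correct strategy outline rather than a proof. (There is also a minor index slip: with your grading convention, $\mathrm{Im}(\Psi_i)\subseteq(\gamma_iG/\gamma_{i+1}G)\otimes G^{ab}$ sits in degree $i+1$, not degree $i$.) Your closing observation that the subtleties degenerate for special $p$-groups, where $G^{ab}$ is elementary abelian and the filtration has two layers, is correct and is exactly why the paper can apply the proposition cleanly in Theorem \ref{specialbound}.
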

	
	We will briefly describe the idea of the next theorem. Note that $a_i=0$ when $i > 2$ for $p$-groups of nilpotency class $2$. Then according to Proposition $\ref{a2prop}$, in order to obtain the bound on the size of the Schur multiplier of special $p$-groups, we try to find a lower bound for $a_2$ by finding  linearly independent elements in Im$(\Psi_2)$. To estimate the size of this set of linearly independent elements, we obtain an upper bound on the number of triangles in a graph with fixed number of edges using Theorem $\ref{trianglebound}$. With this, we come to the main theorem of this section:
	\begin{theorem}\label{specialbound}
		Let $G$ be a special $p$-group of order $p^n$ and rank $k$, $2 \leq k \leq {d \choose 2}$, where $d$ is the cardinality of the minimal generating set of $G$. Then $|\mathcal{M}(G)| \leq p^{\frac{d(d+2k-1)}{2}-k- \binom{d}{3}+ \binom{r}{3} + \mybinom[.55]{ \binom{d}{2} -k- \binom{r}{2} }{2} }$, where $r$ is such that ${r \choose 2} \leq {d \choose 2}-k < {r+1 \choose 2}$.
		
	\end{theorem}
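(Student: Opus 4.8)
The plan is to feed a lower bound on $a_2$ into Ellis's inequality (Proposition \ref{a2prop}) and then convert the counting of linearly independent elements of $\mathrm{Im}(\Psi_2)$ into a triangle–counting problem governed by Theorem \ref{trianglebound}.

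First I would record the numerology forced by specialness. Since $\gamma_2 G = Z(G)$ we have $\gamma_3 G = [\gamma_2 G, G] = 1$, so $G$ has nilpotency class $2$ and hence $a_i = 0$ for $i \geq 3$; thus $a = a_2$. As $\Phi(G) = \gamma_2 G$, the quotient $G^{ab} = G/\gamma_2 G$ is elementary abelian of rank $d$, so $n = d + k$ and $|\gamma_2 G| = p^k$. Substituting $2n - d - 1 = d + 2k - 1$ into Proposition \ref{a2prop} and dividing by $|\gamma_2 G| = p^k$ yields
\[
|\mathcal{M}(G)| \leq p^{\frac{d(d+2k-1)}{2} - k - a_2}.
\]
Writing $t := \binom{d}{2} - k - \binom{r}{2}$ and comparing with the target exponent, the whole theorem reduces to the single inequality
\[
a_2 \;\geq\; \binom{d}{3} - \binom{r}{3} - \binom{t}{2}.
\]
Because $a_2 \geq \dim_{\mathbb{F}_p} \mathrm{Im}(\Psi_2)$ (we simply discard the contributions of $\mathfrak{s}_2$ and $\mathfrak{t}_2$), it suffices to exhibit this many linearly independent elements of $\mathrm{Im}(\Psi_2)$.

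Next I would set up the bridge to graphs. Fixing a minimal generating set $x_1,\dots,x_d$, the image of $\Psi_2$ in $\gamma_2 G \otimes G^{ab}$ is spanned by the triangle elements $\Psi_2(\bar x_i \otimes \bar x_j \otimes \bar x_l) = \overline{[x_i,x_j]}\otimes \bar x_l + \overline{[x_j,x_l]}\otimes \bar x_i + \overline{[x_l,x_i]}\otimes \bar x_j$, one for each of the $\binom{d}{3}$ triples $i<j<l$ (the expression is alternating, so repeated indices contribute nothing). Since the commutators $[x_i,x_j]$ span $\gamma_2 G$, I would select from among them a basis $B$ of $\gamma_2 G$ with $|B|=k$, let $\mathcal{B} \subseteq E(K_d)$ be the corresponding $k$ edges, and let $\mathcal{H} = E(K_d)\setminus\mathcal{B}$ be the complementary graph, which has exactly $\binom{d}{2}-k$ edges. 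Using the decomposition $\gamma_2 G \otimes G^{ab} = \bigoplus_{\{i,j\}\in\mathcal{B}} \langle[x_i,x_j]\rangle\otimes G^{ab}$ and the projections $P_{ij}, P_{ijk}$, each basis vector $[x_i,x_j]\otimes\bar x_l$ with $\{i,j\}\in\mathcal{B}$ becomes a candidate pivot, and for a triangle $T=\{i,j,l\}$ with $\{i,j\}\in\mathcal{B}$ the vector $[x_i,x_j]\otimes\bar x_l$ occurs in $\Psi_2(T)$ with coefficient $1$; moreover $T$ is recovered from this pivot edge together with the opposite vertex, so the assignment is injective.

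The crux is the claim that the triangle elements indexed by the triangles of $K_d$ meeting $\mathcal{B}$ are linearly independent, which would give $\dim_{\mathbb{F}_p}\mathrm{Im}(\Psi_2) \geq \binom{d}{3} - \#\{\text{triangles of } \mathcal{H}\}$. The difficulty, which I expect to be the main obstacle, is that the relations expressing each dependent commutator $[x_a,x_b]$ with $\{a,b\}\in\mathcal{H}$ in terms of $B$ introduce off–diagonal terms: the pivot $[x_i,x_j]\otimes\bar x_l$ of $T$ can reappear in $\Psi_2(T')$ for $T'=\{a,b,l\}$ with $\{a,b\}\in\mathcal{H}$. The key observation is that any such interfering $T'$ necessarily carries an $\mathcal{H}$-edge opposite to the shared vertex, so its own pivot sits in a different tensor slot. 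I would therefore choose the basis $B$ together with a vertex ordering and a pivot-selection rule so that plugging the relations into the triangle elements produces only a strictly triangular (nilpotent) perturbation of the clean incidence matrix, whence independence follows; this bookkeeping is the delicate heart of the argument. Finally, granting the claim, $\mathcal{H}$ has $\binom{d}{2}-k$ edges, so Theorem \ref{trianglebound} bounds its triangles by $\binom{r}{3}+\binom{t}{2}$ with $r$ as in the statement, giving $a_2 \geq \binom{d}{3} - \binom{r}{3} - \binom{t}{2}$, which combined with the reduced inequality completes the proof.
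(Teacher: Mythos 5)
Your overall route is the paper's route: specialness gives $n=d+k$, Proposition~\ref{a2prop} (with $a_i=0$ for $i\geq 3$ because the class is $2$) reduces everything to the single inequality $a_2 \geq \binom{d}{3}-\binom{r}{3}-\binom{t}{2}$, the candidate elements $\Psi_2(\bar x_a\otimes\bar x_b\otimes\bar x_c)$ indexed by the triangles of $K_d$ containing at least one $\mathcal{B}$-edge are exactly the paper's set $I$, and the final count via Theorem~\ref{trianglebound} applied to the complement graph with $\binom{d}{2}-k$ edges is identical. Where you diverge is at the one nontrivial step, the linear independence of $I$, and there your attempt has a genuine gap: you explicitly defer it (``this bookkeeping is the delicate heart of the argument'') to an unconstructed choice of basis, vertex ordering, and pivot-selection rule that is supposed to make the interference matrix strictly triangular. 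You give neither the rule nor any argument that the interference relation --- $T'$ hits the pivot slot of $T$ when an $\mathcal{H}$-edge of $T'$ opposite the shared vertex expands, in the basis $B$, with a nonzero coefficient on the pivot edge of $T$ --- can always be ordered acyclically; your ``key observation'' that such a $T'$ carries its own pivot in a different tensor slot does not exclude cycles in which several triples feed into one another's pivot slots. Since this is the entire content of the lemma, the proposal as written does not establish $a_2\geq\binom{d}{3}-\binom{r}{3}-\binom{t}{2}$.

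For comparison, the paper does not run a perturbation/ordering argument at all. The ordered disjunction in its definition of $I$ is itself the pivot rule: each triple $(u,v,w)$ is assigned its first $\mathcal{B}$-edge in the order $(u,v),(u,w),(v,w)$, and a purported relation $\Psi_2(\bar x_u\otimes\bar x_v\otimes\bar x_w)=\sum C_{abc}\,\Psi_2(\bar x_a\otimes\bar x_b\otimes\bar x_c)$ is hit with the single projection $P$ onto $\langle[\text{pivot edge}]\rangle\otimes\langle \bar x_{\text{opposite vertex}}\rangle$, which sends the left side to a nonzero pure tensor and, by the paper's argument, every other member of $I$ to zero; independence follows in one stroke, with no re-choice of $B$ and no triangularization. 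In other words, the off-diagonal terms you worry about are precisely what the paper's projection step declares to vanish under its pivot assignment. If you accept that vanishing, your elaborate machinery is unnecessary; if you doubt it (as your discussion of the relations expressing $\mathcal{H}$-commutators in terms of $B$ suggests), then the burden is on you to either verify the vanishing of the distinguished projection on the rest of $I$ or actually produce the ordering you invoke --- in either case the attempt, as submitted, stops short of proving the only claim that needed proof.
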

	
	\begin{proof}
		Let $B$ be a basis of $\gamma_2G$ of size $k$ consisting of simple commutators, implying that order of $\gamma_2 G = p^k$. If  $\gamma_2 G=\Phi(G)$, then $n=d+k$ and the bound in Proposition \ref{a2prop} reduces to $|\mathcal{M}(G)| \leq p^{\frac{d(d+2k-1)}{2}-k-a}$. Now we will estimate $a$. In fact, we will give a lower bound of $\text{dim} _{\mathbb{F}_p}(\text{Im}(\Psi_2))$. Towards this end, we will exhibit a set $I$ of linearly independent elements in Im$(\Psi_2)$. 
		Consider
		\begin{align*}
			I:=\bigcup \limits _{1 \leq a < b < c \leq d} \{\Psi_2(\bar{x_a} \otimes \bar{x_b} \otimes \bar{x_c}) \mid [x_a,x_b] \in B \text{ or } [x_a,x_b] \notin B, [x_a,x_c] \in B \text{ or } [x_a,x_b], [x_a,x_c] \notin B, [x_b,x_c] \in B \} 
		\end{align*}
		We will prove that this set is linearly independent. To see this, suppose that 
		\begin{equation} \label{linearindependenceofPsi}
			\Psi_2(\bar{x_u} \otimes \bar{x_v} \otimes \bar{x_w})=\sum \limits_{\mathclap{\substack{\Psi_2(\bar{x_a} \otimes \bar{x_b} \otimes \bar{x_c}) \in I  \\ (a,b,c) \neq (u,v,w)}}}C_{abc}\Psi_2(\bar{x_a} \otimes \bar{x_{b}} \otimes \bar{x_{c}}),
		\end{equation}
		where $\Psi_2(\bar{x_u} \otimes \bar{x_v} \otimes \bar{x_w}) \in I$. If $[x_u,x_v] \in B $, then the map $P_{uvw}$, maps the LHS to $[x_u,x_v] \otimes \bar{x_w} \neq 0$ but maps the RHS to $0$. If  $[x_u,x_v] \notin B, [x_u,x_w] \in B $ then the map $P_{uwv}$, maps the LHS to $[x_u,x_w] \otimes \bar{x_v} \neq 0$ but maps the RHS to $0$. If $[x_u,x_v], [x_u,x_w] \notin B, [x_v,x_w] \in B$, then the projection map $P_{vwu}$ maps the LHS to $[x_v,x_w] \otimes \bar{x_u} \neq 0$, but it maps the RHS to 0. Thus, $I$ is linearly independent. To estimate the size of $I$, we  will define a set $\mathcal{I}$ which is in bijection with $I$. To do this, we first define $\mathcal{B}:=\{(i,j) \mid i<j,[x_i,x_j] \in B\}$. There is a natural bijective correspondence between $I$ and the set 	
		\begin{align*}
			\mathcal{I}:=\bigcup \limits _{1\leq i < j < k \leq d} \{(i,j,k) \mid (i,j) \in \mathcal{B} \text{ or } (i,j) \notin \mathcal{B}, (i,k) \in \mathcal{B} \text{ or } (i,j), (i,k) \notin \mathcal{B}, (j,k) \in \mathcal{B} \} 
		\end{align*}
		Considering the complement of $I$ in the set $\bigcup \limits _{1 \leq a < b < c \leq d} \{\Psi_2(\bar{x_a} \otimes \bar{x_b} \otimes \bar{x_c})\}$, note that $|I^c| = {d \choose 3}-|I|$. Then an upper bound on $I^c$ gives a lower bound on $I$. Observe that 
		\begin{align*}
			\mathcal{I}^c=\bigcup \limits _{1\leq i < j < k \leq d} \{(i,j,k) \mid (i,j),(i,k),(j,k) \notin \mathcal{B} \} 
		\end{align*}
		To estimate $\mathcal{I}^c$, we consider a graph with $d$ vertices numbered $1, \ldots, d$ and an edge connecting vertices $i$ and $j$ if $(i,j) \notin \mathcal{B}$. So the number of edges is ${d \choose 2}-k$. Observe that $|I^c| $ is the number of triangles in this graph. Hence we estimate the number of triangles of this graph. Suppose that ${r \choose 2} \leq {d \choose 2}-k \leq {r+1 \choose 2}$. Then by Theorem \ref{trianglebound}, the number of triangles in the graph is at most ${r \choose 3}+\mybinom[.55]{{d \choose 2}-k-{r \choose 2}}{2}$. Thus $I$ has a lower bound given by ${d \choose 3}-{r \choose 3}- \mybinom[.55]{{d \choose 2}-k-{r \choose 2}}{2}$ and hence the result.
		
	\end{proof}
	
	\section{Bound for Size of Schur multiplier of groups of nilpotency class greater than or equal to 3}
	Having considered special $p$-groups which are of nilpotency class 2, we now shift our attention to groups of nilpotency class strictly greater than 2. Let $G$ be a finite $d$-generated $p$-group of order $p^n$ with $|G^{ab}|=p^m$ and exponent of $G^{ab}$ is $p^e$. The next proposition will be crucially used for our results and was proved in \cite[Theorem 2]{EllWie1999}.
	\begin{prop} [G. Ellis, J. Wiegold, \cite{EllWie1999}]\label{Ellisbound}
		Let $G$ be a finite $p$-group with center $Z(G)$ and lower central series $1= \gamma_{c+1}G \unlhd \gamma_cG \unlhd \ldots \unlhd \gamma_1G =G$. Set $\bar{G}=G/Z(G)$ and consider the homomorphisms
		\[\Psi_2:\bar{G}^{ab} \otimes \bar{G}^{ab} \otimes \bar{G}^{ab} \to \gamma_2G/\gamma_3G \otimes \bar{G}^{ab}\]
		\[\bar{x} \otimes \bar{y} \otimes \bar{z} \mapsto \overline{[x,y]} \otimes \bar{z}+ \overline{[y,z]} \otimes \bar{x}+\overline{[z,x]} \otimes \bar{y}\]
		\[\Psi_3:\bar{G}^{ab} \otimes \bar{G}^{ab} \otimes \bar{G}^{ab} \otimes \bar{G}^{ab} \to \gamma_3G/\gamma_4G \otimes \bar{G}^{ab}\]
		\[\bar{x_1} \otimes \bar{x_2} \otimes \bar{x_3} \otimes \bar{x_4} \mapsto \overline{[[x_1,x_2],x_3]} \otimes \bar{x_4}+ \overline{[x_4,[x_1,x_2]]} \otimes \bar{x_3}+ \overline{[[x_3,x_4],x_1]} \otimes \bar{x_2}+ \overline{[x_2,[x_3,x_4]]} \otimes \bar{x_1}\]
		Here $\bar{x}$ denotes the image in $\bar{G}$ of the element $x \in G$, $\overline{[x,y]}$ denotes the image in $\gamma_2G/\gamma_3G$ of the commutator $[x,y] \in G$ and $\overline{[[x,y],z]}$ denotes the image in $\gamma_3G/\gamma_4G$ of the commutator $[[x,y],z] \in G$. Then $|\mathcal{M}(G)||\gamma_2G||\text{image }(\Psi_2)| \leq |M(G^{ab})||\gamma_2G/\gamma_3G \otimes \bar{G}^{ab}||\gamma_3G/\gamma_4G \otimes \bar{G}^{ab}|\ldots|\gamma_cG \otimes \bar{G}^{ab}|.$
		Moreover the next inequality appears as a remark below \cite[Theorem 2]{EllWie1999}:
		\[|\mathcal{M}(G)||\gamma_2G||\text{image }(\Psi_2)||\text{image }(\Psi_3)| \leq |M(G^{ab})||\gamma_2G/\gamma_3G \otimes \bar{G}^{ab}||\gamma_3G/\gamma_4G \otimes \bar{G}^{ab}|\ldots|\gamma_cG \otimes \bar{G}^{ab}|.\]
	\end{prop}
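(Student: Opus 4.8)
As this proposition is attributed to Ellis and Wiegold, the plan is to reconstruct its derivation from the nonabelian exterior square $G \wedge G$, which is the natural object carrying all the data on the right-hand side. The anchor is the central extension $1 \to \mathcal{M}(G) \to G \wedge G \xrightarrow{\kappa} \gamma_2 G \to 1$ recorded in the introduction, which gives $|G \wedge G| = |\mathcal{M}(G)|\,|\gamma_2 G|$; everything therefore reduces to bounding $|G \wedge G|$ from above. I would filter $G \wedge G$ along the lower central series by letting $M_i$ be the subgroup generated by all $x \wedge y$ with $x \in \gamma_i G$, producing a chain $G \wedge G = M_1 \supseteq M_2 \supseteq \cdots \supseteq M_{c+1} = 1$ (using $\gamma_{c+1} G = 1$). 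The top quotient $M_1/M_2$ is the image of $G \wedge G$ in $G^{ab} \wedge G^{ab} = \mathcal{M}(G^{ab})$, and $|G \wedge G| = \prod_{i \ge 1} |M_i/M_{i+1}|$.

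The heart of the argument is the analysis of the layers $M_i/M_{i+1}$ for $i \ge 2$. Each is generated by the classes of $x \wedge y$ with $x \in \gamma_i G$, and the commutator calculus in $G \wedge G$ shows that $\bar{x} \otimes \bar{y} \mapsto \overline{x \wedge y}$ defines a surjection $\gamma_i G/\gamma_{i+1} G \otimes \bar{G}^{ab} \twoheadrightarrow M_i/M_{i+1}$, where passing to $\bar{G} = G/Z(G)$ is what makes the second tensor slot well-defined. Taking orders layer by layer yields the coarse inequality $|\mathcal{M}(G)|\,|\gamma_2 G| \le |\mathcal{M}(G^{ab})|\,\prod_{i=2}^{c} |\gamma_i G/\gamma_{i+1} G \otimes \bar{G}^{ab}|$. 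The refinement by $|\text{image}(\Psi_2)|$ comes from observing that the surjection onto $M_2/M_3$ is not injective: the cyclic sum $\overline{[x,y]} \otimes \bar{z} + \overline{[y,z]} \otimes \bar{x} + \overline{[z,x]} \otimes \bar{y}$ defining $\Psi_2$ is precisely a Jacobi relation that collapses to zero in $G \wedge G$, so $\text{image}(\Psi_2)$ lies in the kernel and improves the bound on $|M_2/M_3|$ by the factor $|\text{image}(\Psi_2)|$. Multiplying this factor back across the chain produces the stated inequality, and the analogous Hall--Witt relation in the $M_3/M_4$ layer supplies the $|\text{image}(\Psi_3)|$ factor of the ``moreover'' part.

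I expect the main obstacle to be the verification that these graded maps are honestly defined and that the Jacobi and Hall--Witt relations contribute genuinely new collapses rather than ones already absorbed by the coarse count. Concretely, one must check that $\overline{x \wedge y}$ in $M_i/M_{i+1}$ depends only on $x \bmod \gamma_{i+1} G$ and $y \bmod Z(G)\gamma_2 G$, and that the relevant short sequences $\text{image}(\Psi_2) \hookrightarrow \gamma_2 G/\gamma_3 G \otimes \bar{G}^{ab} \twoheadrightarrow M_2/M_3$ (and its $\Psi_3$ analogue) are exact, so that $|\text{image}(\Psi_2)|$ and $|\text{image}(\Psi_3)|$ may be extracted as bona fide multiplicative factors without double counting. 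This exactness---rooted in the commutator identities in the exterior square---is the delicate technical point, and it is exactly what is established in \cite{EllWie1999}.
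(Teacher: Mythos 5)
The paper does not prove this proposition at all: it is imported verbatim, with the first inequality cited as \cite[Theorem 2]{EllWie1999} and the second as the remark following it, so there is no in-paper argument to compare against. Your reconstruction does follow the actual Ellis--Wiegold strategy faithfully: filter $G \wedge G$ by the subgroups $M_i = \langle x \wedge y : x \in \gamma_i G,\ y \in G\rangle$, identify the top layer $M_1/M_2 \cong G^{ab} \wedge G^{ab} \cong \mathcal{M}(G^{ab})$, use the central extension $1 \to \mathcal{M}(G) \to G \wedge G \to \gamma_2 G \to 1$ to convert $|G \wedge G|$ into $|\mathcal{M}(G)||\gamma_2 G|$, and bound each layer $M_i/M_{i+1}$ by the surjection from $\gamma_i G/\gamma_{i+1}G \otimes \bar{G}^{ab}$ (the passage to $\bar{G}^{ab}$ resting on the identity $[x,y] \wedge z = (x \wedge y)\,{}^{z}(x \wedge y)^{-1}$, which kills $\gamma_i G \wedge Z(G)$ modulo higher terms), with the Jacobi-type and Hall--Witt-type collapses showing $\mathrm{im}\,\Psi_2$ and $\mathrm{im}\,\Psi_3$ die in the layers $M_2/M_3$ and $M_3/M_4$ respectively.

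One correction to your closing paragraph: you do not need, and should not claim, exactness of $\mathrm{im}(\Psi_2) \hookrightarrow \gamma_2 G/\gamma_3 G \otimes \bar{G}^{ab} \twoheadrightarrow M_2/M_3$, and Ellis--Wiegold establish no such thing. All that is required --- and all that is true in general --- is the containment $\mathrm{im}(\Psi_2) \subseteq \ker(\sigma_2)$, which already yields $|M_2/M_3| \leq |\gamma_2 G/\gamma_3 G \otimes \bar{G}^{ab}|\,/\,|\mathrm{im}(\Psi_2)|$, and likewise for $\Psi_3$ at the next layer; the kernels may well be strictly larger, which is why the conclusion is an inequality rather than an equality. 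Your second paragraph states exactly this containment version correctly, so the slip is self-contained and does not affect the validity of the argument; with that sentence repaired, your sketch is a correct account of the proof in the cited source, with the genuinely technical verifications (well-definedness of the layer maps modulo $Z(G)\gamma_2 G$ and the commutator identities behind the two collapses) appropriately deferred to \cite{EllWie1999}, just as the paper itself defers them.
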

	Using this inequality, Ellis and Wiegold \cite[Theorem 2]{EllWie1999} prove that $|\mathcal{M}(G)| \leq p^{\frac{d(m-e)}{2}+(\delta-1)(n-m)-\max(0, \delta-2)}$. In \cite[Theorem 1.2]{Rai2017}, Rai sharpened the bound to give $|\mathcal{M}(G)| \leq p^{\frac{1}{2}(d-1)(n-k-(\alpha_1-\alpha_d))+(\delta-1)(n-m)-\max(0, \delta-2)}$.
	The next proposition appears in \cite[Lemma 2.2]{ShaNirJoh2020} and will be used in the proof.
	
	\begin{prop} \label{Niroo}
		Let $G$ be a two generator group of order $p^n$ ($p \neq 2$) and nilpotency class c. Then $\text{Im } \Psi_i \neq 0$ for all odd integers $i$ such that $3 \leq i \leq c$.
	\end{prop}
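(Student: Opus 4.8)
The plan is to exhibit, for each odd $i$ with $3\le i\le c$, a single explicit tensor on which $\Psi_i$ does not vanish. Write $G=\langle x,y\rangle$ and record two structural facts. Since $G$ has class $c\ge i\ge 3$, the quotient $\bar G=G/Z(G)$ has class $c-1\ge 2$, hence is non-abelian; a $2$-generated non-abelian group is minimally $2$-generated, so the images $\bar x,\bar y$ are nonzero and independent in $\bar G^{ab}$. Moreover, because the class of $G$ is at least $i$, the lower central series cannot stabilise at step $i$ (else $\gamma_cG=1$), so $\gamma_iG/\gamma_{i+1}G\neq 0$; this layer is spanned by the images of the weight-$i$ basic commutators in $x,y$, and I fix a basis $\mathcal B_i$ of it consisting of such commutators.

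I would then feed $\Psi_i$ the alternating tensor $\tau=\bar x\otimes\bar y\otimes\bar x\otimes\cdots$ with $i+1$ (even) factors and read off its components under the decomposition $\gamma_iG/\gamma_{i+1}G\otimes\bar G^{ab}=\bigoplus_{u\in\mathcal B_i}\bigoplus_{z\in\{x,y\}}\langle\bar u\rangle\otimes\langle\bar z\rangle$. The case $i=3$ is the model: one computes $\Psi_3(\tau)=2\,\overline{[[x,y],x]}\otimes\bar y-2\,\overline{[[x,y],y]}\otimes\bar x$, and since at least one of $[[x,y],x],[[x,y],y]$ is nonzero in $\gamma_3G/\gamma_4G$ while $p\neq 2$ makes $2$ invertible, the projection onto the corresponding summand is nonzero, so $\Psi_3(\tau)\neq 0$. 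For general odd $i$ the same template applies: evaluate $\Psi_i(\tau)$ modulo $\gamma_{i+1}G$, regroup its terms in the form (weight-$i$ commutator)$\otimes$(generator) using the bilinearity and Jacobi identity of the associated graded Lie ring $\bigoplus_j\gamma_jG/\gamma_{j+1}G$, and identify a summand $\langle\bar u\rangle\otimes\langle\bar z\rangle$ with $u\in\mathcal B_i$ whose coefficient is a nonzero scalar of $\mathbb F_p$.

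The heart of the matter, and the step I expect to be hardest, is this non-cancellation, since only $\Psi_2$ and $\Psi_3$ are given explicitly and the general $\Psi_i$ must be extracted from the construction of \cite{Elli1998}. Both hypotheses enter precisely here. The even length $i+1$ of $\tau$ is chosen to play off the antisymmetry of $\Psi_i$ in its bracketed arguments: for odd $i$ the relevant contributions reinforce and leave an overall coefficient that is a nonzero even integer, which $p\neq 2$ renders invertible, whereas for even $i$ they cancel — consistent with $\Psi_2=0$, which holds because a totally antisymmetric trilinear map on a $2$-generated abelian group vanishes. A useful safeguard, already visible at $i=3$, is that $\Psi_i(\tau)$ produces a combination involving both $\bar x$ and $\bar y$ in the tensor slot, so that whichever weight-$i$ basic commutators happen to survive in $\gamma_iG/\gamma_{i+1}G$, at least one projection $\langle\bar u\rangle\otimes\langle\bar z\rangle$ is hit nontrivially; carrying this out for every odd $i$ in the stated range yields $\mathrm{Im}\,\Psi_i\neq 0$.
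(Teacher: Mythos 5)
You should first note that the paper contains no proof of this proposition to compare against: it is imported verbatim from \cite[Lemma 2.2]{ShaNirJoh2020}, and the text only ever invokes the single instance $i=3$ (in the $\delta=2$ case of Theorem 4.3). Judged on its own merits, your argument is genuinely complete only for $i=3$. There the computation is correct: with $\tau=\bar x\otimes\bar y\otimes\bar x\otimes\bar y$ one gets $\Psi_3(\tau)=2\,\overline{[[x,y],x]}\otimes\bar y-2\,\overline{[[x,y],y]}\otimes\bar x$, the two weight-$3$ basic commutators $[[x,y],x]$ and $[[x,y],y]$ generate $\gamma_3G/\gamma_4G\neq 0$, hence at least one lies outside the Frattini subgroup of that layer, and its term lives in its own direct summand $A\otimes\langle\bar y\rangle$ or $A\otimes\langle\bar x\rangle$, so $p\neq 2$ finishes it. (Two small points you gloss over but which are repairable: ``independent in $\bar G^{ab}$'' should mean generators chosen so that $\bar G^{ab}\cong\langle\bar x\rangle\oplus\langle\bar y\rangle$ — not automatic for an arbitrary generating pair, but arrangeable, using that commutators of weight $\geq 2$ are unchanged when $G$ is replaced by $\langle u,v\rangle$ with $G=\langle u,v\rangle Z(G)$; and nonvanishing in $A\otimes\langle\bar y\rangle\cong A/\mathrm{ord}(\bar y)A$ needs the surviving commutator to lie outside $pA$, which the generating property supplies.)

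For odd $i\geq 5$, however, your proposal is an expectation rather than a proof, and the gap is twofold. First, you never write down $\Psi_i$ for $i>3$: the paper displays only $\Psi_2$ and $\Psi_3$, and the four terms of $\Psi_3$ follow a block pattern in $[x_1,x_2]$ and $[x_3,x_4]$ that does not dictate an obvious general formula, so the claimed parity mechanism — terms reinforcing to an even nonzero coefficient for odd $i$, cancelling for even $i$ — cannot be checked without extracting the actual definition from \cite{Elli1998}. Second, and more seriously, even granting a formula of the expected shape, your nonvanishing step breaks: $\gamma_iG/\gamma_{i+1}G\neq 0$ only guarantees that \emph{some} weight-$i$ basic commutator survives, whereas $\Psi_i(\tau)$ on the alternating tensor produces only specific combinations of $[x,y,x,y,\ldots]$-type commutators, and for $i\geq 5$ these can all die modulo $\gamma_{i+1}G$ while, say, $[x,y,y,\ldots,y]$ survives. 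The case $i=3$ is special precisely because there are exactly two weight-$3$ basic commutators and they appear in distinct direct summands; your ``safeguard'' paragraph asserts, but does not argue, that this persists. To close the gap you would need Ellis's general $\Psi_i$ together with an input tuple tailored to a commutator known to generate the layer $\gamma_iG/\gamma_{i+1}G$, which is what the cited Lemma 2.2 of \cite{ShaNirJoh2020} accomplishes.
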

	
	The following theorem improves \cite[Theorem 2]{EllWie1999}:
	
	\begin{theorem} \label{nil3bound}
		Let $G$ be a finite $p$-group of order $p^n$ and nilpotency class $c \geq 3$ where $p \neq 2, 3$. Then $|\mathcal{M}(G)| \leq p^{\frac{d(m-e)}{2}+(\delta-1)(n-m)-\max(0,\delta-2)-\max(1,\delta-3)}$.
	\end{theorem}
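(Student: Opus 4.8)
The plan is to run the Ellis--Wiegold argument through the sharper form of Proposition~\ref{Ellisbound} recorded in the remark there, namely
\[
|\mathcal{M}(G)|\,|\gamma_2G|\,|\text{Im}(\Psi_2)|\,|\text{Im}(\Psi_3)|\le |M(G^{ab})|\prod_{i=2}^{c}|\gamma_iG/\gamma_{i+1}G\otimes\bar{G}^{ab}|,
\]
and to reuse verbatim their estimates for the right-hand side and for $|\gamma_2G|$, together with the bound $|\text{Im}(\Psi_2)|\ge p^{\max(0,\delta-2)}$. These already produce $|\mathcal{M}(G)|\le p^{\frac{d(m-e)}{2}+(\delta-1)(n-m)-\max(0,\delta-2)}$ once the factor $|\text{Im}(\Psi_3)|$ is discarded, so it suffices to prove the single new estimate $|\text{Im}(\Psi_3)|\ge p^{\max(1,\delta-3)}$ and then divide the displayed inequality by it. Two preliminary remarks: $\delta\ge2$, since $\delta=1$ would force $G/Z(G)$ cyclic and $G$ abelian, contradicting $c\ge3$; and $\gamma_3G/\gamma_4G\ne1$, since $c\ge3$ gives $\gamma_3G\ne\gamma_4G$.

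To estimate $\text{Im}(\Psi_3)$ I would fix generators $x_1,\dots,x_\delta$ of $\bar{G}=G/Z(G)$ whose images furnish a cyclic decomposition $\bar{G}^{ab}=\langle\bar{x}_1\rangle\oplus\dots\oplus\langle\bar{x}_\delta\rangle$. Since commutators depend only on cosets modulo $Z(G)$, the quotient $\gamma_3G/\gamma_4G$ is generated by the images of the triple commutators $[[x_i,x_j],x_k]$, so after relabelling we may assume $\overline{[[x_1,x_2],x_3]}$ is nonzero, in fact nontrivial modulo $p(\gamma_3G/\gamma_4G)$ (possible, as this mod-$p$ quotient is spanned by such triple commutators); this guarantees each tensor $\overline{[[x_1,x_2],x_3]}\otimes\bar{x}_j$ is nonzero. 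If $\delta\ge4$, then for each $j\in\{4,\dots,\delta\}$ the projection $\gamma_3G/\gamma_4G\otimes\bar{G}^{ab}\to\gamma_3G/\gamma_4G\otimes\langle\bar{x}_j\rangle$ annihilates the last three summands of $\Psi_3(\bar{x}_1\otimes\bar{x}_2\otimes\bar{x}_3\otimes\bar{x}_j)$, which are tensored with $\bar{x}_3,\bar{x}_2,\bar{x}_1$, and returns $\overline{[[x_1,x_2],x_3]}\otimes\bar{x}_j\ne0$. As these lie in distinct direct summands they generate a subgroup of order at least $p^{\delta-3}$, giving $\dim_{\mathbb{F}_p}\text{Im}(\Psi_3)\ge\delta-3=\max(1,\delta-3)$; this step needs no hypothesis on $p$.

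There remains the nonvanishing $\text{Im}(\Psi_3)\ne0$ for $\delta\in\{2,3\}$, where $\max(1,\delta-3)=1$, and this is where I expect the real work to lie. For $\delta=2$ I would pass to $H=\langle x_1,x_2\rangle$; as $G=HZ(G)$ with $Z(G)$ central, one has $\gamma_iG=\gamma_iH$ for all $i\ge2$, so $H$ is a two-generator group of class $c\ge3$, and Proposition~\ref{Niroo} (which requires $p\ne2$) yields $\text{Im}(\Psi_3^H)\ne0$. The surjection $\bar{G}\twoheadrightarrow\bar{H}$ arising from $H\cap Z(G)\le Z(H)$ is compatible with the two copies of $\Psi_3$, so this nonvanishing transfers to $\Psi_3^G$. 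For $\delta=3$ no two-generator section need have class $3$, so instead I would evaluate $\Psi_3$ on the three generators and project onto one cyclic summand; the Hall--Witt identity $\overline{[[a,b],c]}+\overline{[[b,c],a]}+\overline{[[c,a],b]}=0$ in $\gamma_3G/\gamma_4G$ collapses such a projection to an explicit integer combination of triple commutators, for instance $\big(2\,\overline{[[x_1,x_2],x_3]}+\overline{[[x_2,x_3],x_1]}\big)\otimes\bar{x}_1$ coming from $\Psi_3(\bar{x}_1\otimes\bar{x}_2\otimes\bar{x}_3\otimes\bar{x}_1)$.

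The main obstacle is to show that, whenever $\gamma_3G/\gamma_4G\ne0$, at least one such combination is nonzero, and this is exactly where $p\ne2,3$ enters. Indeed, when $p=3$ a group may satisfy $\overline{[[a,b],c]}=\overline{[[b,c],a]}=\overline{[[c,a],b]}=w\ne0$ with $3w=0$, so that every combination of the above type vanishes and $\text{Im}(\Psi_3)=0$; thus $p\ne3$ is genuinely needed (while $p\ne2$ is needed for Proposition~\ref{Niroo}). I would therefore organise the $\delta=3$ step as a short case analysis over which weight-three basic commutators in $x_1,x_2,x_3$ are nonzero, in each case exhibiting an evaluation of $\Psi_3$ whose projection is a combination with a coefficient coprime to $p$. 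Once $\text{Im}(\Psi_3)\ne0$ is secured in these low-rank cases, combining with the dimension count of the previous paragraph gives $|\text{Im}(\Psi_3)|\ge p^{\max(1,\delta-3)}$ in all cases, and dividing the first display by this factor completes the proof.
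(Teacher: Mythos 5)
Your overall architecture coincides with the paper's: reduce, via the remark form of Proposition~\ref{Ellisbound} and the Ellis--Wiegold estimates, to the single inequality $|\text{Im}(\Psi_3)| \geq p^{\max(1,\delta-3)}$; for $\delta \geq 4$ tensor a generator $\overline{[[y_1,y_2],y_3]}$ of $A=\gamma_3G/\gamma_4G$ (nonzero modulo $pA$) with $\bar{x}_4,\dots,\bar{x}_\delta$ and separate the resulting elements by the projections onto $A \otimes \langle \bar{x}_j \rangle$; and for $\delta=2$ invoke Proposition~\ref{Niroo}. These parts are correct, and your $\delta=2$ treatment via $H=\langle x_1,x_2\rangle$ with $G=HZ(G)$, $\gamma_iG=\gamma_iH$, and compatibility of the two $\Psi_3$'s is in fact a more careful justification than the paper's bare citation. (A minor point: for $\delta\geq4$ you should not insist on relabelling to the distinct-entry commutator $[[x_1,x_2],x_3]$, since the only generators surviving modulo $pA$ may have repeated entries; but the projection argument works verbatim for any $\overline{[[y_1,y_2],y_3]}$ with $y_i\in\{x_1,x_2,x_3\}$, so this is cosmetic.)

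The genuine gap is the case $\delta=3$, which is exactly where the paper's proof does its nontrivial work and where the hypothesis $p \neq 3$ is consumed: you compute one projection, $(2\overline{[[x_1,x_2],x_3]}+\overline{[[x_2,x_3],x_1]})\otimes\bar{x}_1$, correctly locate the obstruction at $p=3$, and then announce that you ``would organise a short case analysis'' producing a coefficient coprime to $p$ --- but you never carry it out, and locating the obstruction is not the same as overcoming it. What is missing, concretely: (i) the subcase where every generator of $A/pA$ comes from a triple commutator with a repeated entry, say $\overline{[[y_1,y_2],y_3]}$ with $\{y_1,y_2,y_3\}\subsetneq\{x_1,x_2,x_3\}$, where one tensors with the unused generator $\bar{x}_i$ and gets a nonzero projection in $A\otimes\langle\bar{x}_i\rangle$ with no condition on $p$; (ii) in the distinct-entry subcase, the reduction via antisymmetry mod $\gamma_4G$ and Hall--Witt to $u=\overline{[[x_1,x_2],x_3]}$, $v=\overline{[[x_2,x_3],x_1]}$, $w=\overline{[[x_3,x_1],x_2]}$ with $u+v+w=0$, and the identification of the two accessible projections $(u-w)\otimes\bar{x}_1$ from $\Psi_3(\bar{x}_1\otimes\bar{x}_2\otimes\bar{x}_3\otimes\bar{x}_1)$ and $(u-v)\otimes\bar{x}_2$ from $\Psi_3(\bar{x}_1\otimes\bar{x}_2\otimes\bar{x}_3\otimes\bar{x}_2)$; and (iii) the closing arithmetic showing one of these survives: if $u-v$ and $u-w$ both vanished modulo $pA$, then $0=u+v+w\equiv 3u$, so $u\equiv v\equiv w\equiv 0 \bmod pA$ since $p\neq 3$, whence (combined with (i)) $A=pA=0$, contradicting $c\geq 3$. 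The paper packages (ii)--(iii) by splitting according to whether $\gamma_3G/\gamma_4G$ is generated by $\{u\}$ or by $\{u,w\}$: in the two-generator case $u-w$ lies in a minimal generating set because $\{u-w,u\}$ generates, and in the cyclic case one writes $v=mu$, $w=nu$ with $1+m+n\equiv 0 \bmod p$ and uses $p\neq 3$ to make $1-m$ or $1-n$ a unit. Without some version of this analysis, the nonvanishing of $\text{Im}(\Psi_3)$ for $\delta=3$ --- and hence the theorem --- is not established by your proposal.
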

	
	\begin{proof}
		Following \cite[Theorem 2]{EllWie1999}, it is enough to estimate $|\text{Im }(\Psi_3)|$. The generating set of $G/Z(G)$, $\{\bar{x_1},\bar{x_2}, \ldots, \bar{x_{\delta}}\}$, can be chosen in such a manner that $(G/Z(G))^{ab} \cong \langle\bar{x_1} \rangle \times \langle\bar{x_2}\rangle \times \ldots \times \langle\bar{x_{\delta}}\rangle$. Note that $\gamma_3G/\gamma_4G$ is non-trivial and let $\overline{[[y_1,y_2],y_3]}$ be an element of the minimal generating set and without loss of generality $y_i \in \{x_1,x_2,x_3\}$.		
		Assume $\delta > 3$ and consider the following $\delta-3$ elements
		\[\Psi_3(\bar{y_1} \otimes \bar{y_2} \otimes\bar{y_3} \otimes \bar{x_4}),\Psi_3(\bar{y_1} \otimes \bar{y_2} \otimes\bar{y_3} \otimes \bar{x_5}),\ldots, \Psi_3(\bar{y_1} \otimes \bar{y_2} \otimes\bar{y_3} \otimes \bar{x_{\delta}}).\]
		Set $A := \gamma_3G/\gamma_4G$ and note that \[A \otimes \bar{G}^{ab} \cong (A \otimes \langle \bar{x_1} \rangle) \oplus \ldots \oplus (A \otimes \langle \bar{x_{\delta}} \rangle ).\]
		Among the above list of elements, for $i > 3$, $\Psi_3(\bar{y_1} \otimes \bar{y_2} \otimes\bar{y_3} \otimes \bar{x_i})$ is the only element to have a non-trivial projection in $A \otimes \langle \bar{x_i} \rangle $. Thus these $\delta-3$ elements are linearly independent and we have \[|\text{Im }(\Psi_3)| \geq p^{\delta-3}.\]
		Suppose that $\delta = 3$. If $y_i$ in $\overline{[[y_1,y_2],y_3]}$ are not all distinct, there exists $x_i \notin \{y_1,y_2,y_3\}$ and then $\Psi_3(\bar{y_1} \otimes \bar{y_2} \otimes\bar{y_3} \otimes \bar{x_i})$ has a non-trivial projection in $A \otimes \langle \bar{x_i} \rangle $ and the statement holds. Thus we can assume that each commutator generating $\gamma_3/\gamma_4$ have distinct terms in the commutator and they are:
		\begin{align*}
			\{[[x_1,x_2], x_3], \ [[x_2,  x_3], x_1], \ [[x_3, x_1], x_2], \ [[x_1, x_3], x_2], \ [[x_2, x_1], x_3], \ [[x_3, x_2], x_1], \\ [x_1, [x_2, x_3]], \ [x_2, [x_1, x_3]], \ [x_3, [x_1, x_2]], \ [x_1, [x_3, x_2]], \ [x_2, [x_3, x_1]], \ [x_3, [x_2, x_1]]   \}
		\end{align*} 
		Using the relation $[[a,b],c] \equiv [c,[b,a]] \text{ mod }\gamma_3G/\gamma_4G$, one can note that the $ 12$ elements listed above can be generated by $\{[[x_1,x_2],x_3],[[x_2,x_3],x_1],[[x_3,x_1],x_2]\}$. Now using the Hall-Witt identity, we obtain $[[x_1,x_2],x_3]+[[x_2,x_3],x_1]+[[x_3,x_1],x_2]=0$. Without loss of generality, we observe that $\gamma_3G/\gamma_4G$ is generated by $\{[[x_1,x_2],x_3]\}$ or by $\{[[x_1,x_2],x_3],[[x_3,x_1],x_2]\}$. If $\gamma_3G/\gamma_4G$ is generated by $\{ [x_1,x_2,x_3] \} $, we will show that either $\Psi_3(\bar{x_1} \otimes \bar{x_2} \otimes\bar{x_3} \otimes \bar{x_1})$ or $\Psi_3(\bar{x_1} \otimes \bar{x_2} \otimes\bar{x_3} \otimes \bar{x_2})$ is non-trivial. If $\gamma_3G/\gamma_4G$ is generated by $\{[[x_1,x_2],x_3],[[x_3,x_1],x_2]\}$, we will show that $\Psi_3(\bar{x_1} \otimes \bar{x_2} \otimes\bar{x_3} \otimes \bar{x_1})$ is non-trivial. 
		Suppose $\gamma_3G/\gamma_4G$ is generated by $\{[[x_1,x_2],x_3],[[x_3,x_1],x_2]\}$, then the projection of $\Psi_3(\bar{x_1} \otimes \bar{x_2} \otimes\bar{x_3} \otimes \bar{x_1})$ in $A \otimes \langle \bar{x_1} \rangle$ is given by $[[x_1,x_2],x_3] \otimes x_1+[x_2,[x_3,x_1]] \otimes x_1=[[x_1,x_2],x_3] \otimes x_1-[[x_3,x_1],x_2]] \otimes x_1$. Now noting that if $\{a,b\}$ is a generating set of a group, then $\{a-b,a\}$ is also a generating set of that group,  we observe that $[[x_1,x_2],x_3]-[[x_3,x_1],x_2]]$ belongs to a set of minimal generators of $\gamma_3G/\gamma_4G$. Thus $[[x_1,x_2],x_3] \otimes x_1-[[x_3,x_1],x_2]] \otimes x_1$ is non-trivial. Now suppose $\gamma_3G/\gamma_4G$ is generated by $[[x_1,x_2],x_3]$. By Hall-Witt identity we obtain, $[[x_1,x_2],x_3]+[[x_2,x_3],x_1]+[[x_3,x_1],x_2]=0$ in $\gamma_3G/\gamma_4G$, which can be considered as the equation $1+m+n \equiv 0 \text{ mod }p$. From this either $1-m$ or $1-n$ is non-zero $\text{mod }p$ (as $p \neq 3$) and is thus a generator of the cyclic group $\gamma_3G/\gamma_4G$, i.e. $[[x_1,x_2],x_3]-[[x_3,x_1],x_2]$ or $[[x_1,x_2],x_3]-[[x_2,x_3],x_1]$ is a generator.  Now, if $[[x_1,x_2],x_3]-[[x_3,x_1],x_2]$ is a generator, then $\Psi_3(\bar{x_1} \otimes \bar{x_2} \otimes\bar{x_3} \otimes \bar{x_1})$ is non-trivial as its projection in $A \otimes \langle \bar{x_1} \rangle$ is $[[x_1,x_2],x_3] \otimes x_1+[x_2,[x_3,x_1]] \otimes x_1=([[x_1,x_2],x_3]-[[x_3,x_1],x_2]) \otimes x_1$ which is non-zero. Now, if $[[x_1,x_2],x_3]-[[x_2,x_3],x_1]$ is a generator, then $\Psi_3(\bar{x_1} \otimes \bar{x_2} \otimes\bar{x_3} \otimes \bar{x_2})$ is non-trivial  as its projection in $A \otimes \langle \bar{x_1} \rangle$ is $[[x_1,x_2],x_3] \otimes x_2+[[x_3,x_2],x_1]] \otimes x_2=([[x_1,x_2],x_3]-[[x_2,x_3],x_1]) \otimes x_2$ is non-zero. If $\delta=2$, then Proposition \ref{Niroo} gives a non-trivial element in Im $\Psi_3 $ which completes the theorem.
		
	\end{proof}
	
	In \cite[Theorem 1.1]{Rai2017}, the author showed that \cite[Theorem 2]{EllWie1999} implied the bound $|\mathcal{M}(G)| \leq p^{\frac{1}{2}(d-1)(n-k-2)+1}$. Using Theorem \ref{nil3bound}, we obtain the following improvement:
	
	\begin{corollary} \label{raisimplerbound}
		Let $G$ be a finite $p$-group of nilpotency class $c \geq 3$ of order $p^n \ (p \neq 2,3)$ with $|\gamma_2|=p^k$ and $d(G)=d$. Then $|\mathcal{M}(G)| \leq p^{\frac{1}{2}(d-1)(n-k-2)}$.
	\end{corollary}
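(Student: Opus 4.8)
The plan is to derive this corollary directly from Theorem \ref{nil3bound} by comparing its exponent with the exponent Rai extracted from the Ellis--Wiegold bound. First I would observe that the exponent appearing in Theorem \ref{nil3bound}, namely $\frac{d(m-e)}{2}+(\delta-1)(n-m)-\max(0,\delta-2)-\max(1,\delta-3)$, is precisely the Ellis--Wiegold exponent $\frac{d(m-e)}{2}+(\delta-1)(n-m)-\max(0,\delta-2)$ of \cite[Theorem 2]{EllWie1999} with the single extra summand $-\max(1,\delta-3)$ subtracted. In the present notation $m=n-k$, since $|G^{ab}|=p^m$ and $|\gamma_2G|=p^k$, so the two exponents are written in compatible variables.

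Next I would invoke the fact, recalled in the sentence immediately preceding the statement, that Rai \cite[Theorem 1.1]{Rai2017} showed the Ellis--Wiegold inequality implies $|\mathcal{M}(G)| \leq p^{\frac{1}{2}(d-1)(n-k-2)+1}$; equivalently, the Ellis--Wiegold exponent is bounded above by $\frac{1}{2}(d-1)(n-k-2)+1$. Combining this with the observation above yields
\[
\frac{d(m-e)}{2}+(\delta-1)(n-m)-\max(0,\delta-2)-\max(1,\delta-3) \leq \tfrac{1}{2}(d-1)(n-k-2)+1-\max(1,\delta-3).
\]
Since $\max(1,\delta-3)\geq 1$ for every value of $\delta$, the right-hand side is at most $\frac{1}{2}(d-1)(n-k-2)$, and the corollary follows at once from Theorem \ref{nil3bound}.

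There is essentially no obstacle here: the only content is the elementary inequality $\max(1,\delta-3)\geq 1$, which is exactly what converts the additional factor $-\max(1,\delta-3)$ saved by Theorem \ref{nil3bound} into the removal of the $+1$ in Rai's bound. The one point that warrants a sentence of care is confirming that the bound being improved is Rai's \cite[Theorem 1.1]{Rai2017} estimate coming from the Ellis--Wiegold inequality (our Proposition \ref{Ellisbound}), rather than the sharper exponent of \cite[Theorem 1.2]{Rai2017}; because Theorem \ref{nil3bound} is built on the Ellis--Wiegold estimate, it is precisely the $\frac{1}{2}(d-1)(n-k-2)+1$ bound from which we may subtract one.
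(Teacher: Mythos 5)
Your proposal is correct and coincides with the paper's own (implicit) argument: the paper states this corollary without a separate proof, the intended derivation being exactly yours --- the exponent in Theorem \ref{nil3bound} is the Ellis--Wiegold exponent of \cite[Theorem 2]{EllWie1999} with the extra term $-\max(1,\delta-3)\leq -1$, and since Rai's \cite[Theorem 1.1]{Rai2017} bound $\frac{1}{2}(d-1)(n-k-2)+1$ is obtained by estimating that same exponent, the estimate carries through with $1$ subtracted. Your closing remark also correctly isolates the one point needing care, namely that Theorem \ref{nil3bound} refines the Ellis--Wiegold inequality itself (not Rai's sharper \cite[Theorem 1.2]{Rai2017}), so it is precisely Rai's Theorem 1.1 computation that applies verbatim.
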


	In \cite[Theorem 1.2]{Rai2017}, the author sharpened the bound in \cite[Theorem 2]{EllWie1999}. Using the same ideas, the bound in Theorem \ref{nil3bound} can be further sharpened as follows:
	
	\begin{corollary} \label{raisharpenednil3boundimproved}
		Let $G$ be a finite $p$-group of nilpotency class $c \geq 3$ of order $p^n \ (p \neq 2,3)$ with $d(G)=d$, $d(G/Z)=\delta$ and $G^{ab}=C_{p^{\alpha_1}} \times C_{p^{\alpha_2}} \times \ldots \times C_{p^{\alpha_d}} \ (\alpha_1 \geq \alpha_2 \geq \ldots \geq \alpha_d)$. Then $|\mathcal{M}(G)| \leq p^{\frac{1}{2}(d-1)(n-k-(\alpha_1-\alpha_d))+(\delta-1)k-\max(0,\delta-2)-\max(1,\delta-3)}$.
	\end{corollary}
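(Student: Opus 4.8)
The plan is to re-run Rai's proof of \cite[Theorem 1.2]{Rai2017}, but starting from the stronger form of the Ellis--Wiegold inequality recorded in Proposition \ref{Ellisbound} that keeps the factor $|\text{Im}(\Psi_3)|$ on the left, and then to feed in the lower bound for $|\text{Im}(\Psi_3)|$ obtained in the proof of Theorem \ref{nil3bound}. First I would rewrite Proposition \ref{Ellisbound} as
$$|\mathcal{M}(G)| \leq \frac{|M(G^{ab})|}{|\text{Im}(\Psi_2)|\,|\text{Im}(\Psi_3)|}\cdot\frac{|\gamma_2G/\gamma_3G \otimes \bar{G}^{ab}| \cdots |\gamma_cG \otimes \bar{G}^{ab}|}{|\gamma_2G|},$$
which isolates three factors to be estimated separately: the abelian multiplier $|M(G^{ab})|$, the tensor-product quotient by $|\gamma_2G|$, and the reciprocal of $|\text{Im}(\Psi_2)|\,|\text{Im}(\Psi_3)|$.

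The second step is to estimate each factor. Since $\bar{G}^{ab}$ is $\delta$-generated, $|\gamma_iG/\gamma_{i+1}G \otimes \bar{G}^{ab}| \leq |\gamma_iG/\gamma_{i+1}G|^{\delta}$, so the tensor-product factor is at most $|\gamma_2G|^{\delta}/|\gamma_2G| = p^{(\delta-1)k} = p^{(\delta-1)(n-m)}$, using $m = n-k$, exactly as in \cite[Theorem 2]{EllWie1999}. For the abelian multiplier I would import Rai's estimate $|M(G^{ab})| \leq p^{\frac{1}{2}(d-1)(n-k-(\alpha_1-\alpha_d))}$ from \cite[Theorem 1.2]{Rai2017}, which sharpens the coarser Ellis--Wiegold bound $p^{d(m-e)/2}$ and is the only place where the cyclic decomposition of $G^{ab}$ is used. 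Finally, $|\text{Im}(\Psi_2)| \geq p^{\max(0,\delta-2)}$ is the Ellis--Wiegold estimate, while $|\text{Im}(\Psi_3)| \geq p^{\max(1,\delta-3)}$ is precisely what the proof of Theorem \ref{nil3bound} establishes, valid because $c \geq 3$ forces $\gamma_3G/\gamma_4G \neq 1$ and $p \neq 2,3$. Multiplying these four estimates gives
$$|\mathcal{M}(G)| \leq p^{\frac{1}{2}(d-1)(n-k-(\alpha_1-\alpha_d))+(\delta-1)k-\max(0,\delta-2)-\max(1,\delta-3)},$$
which is the asserted bound.

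The main point requiring care is not any single computation but the orthogonality of the two improvements: Rai's sharpening acts on the factor $|M(G^{ab})|$, whereas the new input acts on $|\text{Im}(\Psi_3)|$, so I must confirm that Rai's argument invokes only the $\Psi_2$-version of Proposition \ref{Ellisbound} and therefore leaves the $\Psi_3$-factor free to be divided out; the two estimates then compose without interference. I would also verify the small cases $\delta \in \{2,3\}$, where $\max(1,\delta-3)=1$: the required single nontrivial element of $\text{Im}(\Psi_3)$ is supplied by Proposition \ref{Niroo} when $\delta=2$ and by the Hall--Witt case analysis inside Theorem \ref{nil3bound} when $\delta=3$.
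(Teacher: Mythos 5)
Your proposal is correct and is precisely the argument the paper intends: the paper states this corollary without proof, remarking only that it follows from Rai's \cite[Theorem 1.2]{Rai2017} ``using the same ideas,'' i.e., by inserting Rai's sharpened estimate for $|M(G^{ab})|$ together with the bound $|\text{Im}(\Psi_3)| \geq p^{\max(1,\delta-3)}$ from the proof of Theorem \ref{nil3bound} into the $\Psi_3$-version of Proposition \ref{Ellisbound}. Your factor-by-factor accounting --- the tensor factor $p^{(\delta-1)k}$, the $\Psi_2$-factor $p^{\max(0,\delta-2)}$, the observation that Rai's improvement acts only on the $|M(G^{ab})|$ term so the two refinements compose without interference, and the $\delta\in\{2,3\}$ cases via Proposition \ref{Niroo} and the Hall--Witt analysis --- is exactly the intended route.
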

	
	This easily leads to the following corollary:
	
	\begin{corollary} \label{rainothomocyclicboundimproved}
		Let $G$ be a finite $p$-group of nilpotency class $c \geq 3$ of order $p^n$ with $d(G)=d$, $\gamma_2(G)=p^k$ and $G^{ab}C_{p^{\alpha_1}} \times C_{p^{\alpha_2}} \times \ldots \times C_{p^{\alpha_d}} \ (\alpha_1 \geq \alpha_2 \geq \ldots \geq \alpha_d)$ where $p \neq 2,3$. Then \[|\mathcal{M}(G)| \leq p^{\frac{1}{2}(d-1)(n+k-2-(\alpha_1-\alpha_d))}.\]
		In particular, if $G^{ab}$ is not homocyclic, then
		\[|\mathcal{M}(G)| \leq p^{\frac{1}{2}(d-1)(n+k-3)}.\]
	\end{corollary}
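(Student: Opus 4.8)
The plan is to derive Corollary~\ref{rainothomocyclicboundimproved} straight from Corollary~\ref{raisharpenednil3boundimproved}, so that the entire argument becomes a comparison of exponents. Writing $\epsilon$ for the exponent in Corollary~\ref{raisharpenednil3boundimproved}, namely
$$\epsilon = \tfrac{1}{2}(d-1)\big(n-k-(\alpha_1-\alpha_d)\big) + (\delta-1)k - \max(0,\delta-2) - \max(1,\delta-3),$$
I would subtract $\epsilon$ from the target exponent $\tfrac{1}{2}(d-1)\big(n+k-2-(\alpha_1-\alpha_d)\big)$. All terms involving $n$ and $\alpha_1-\alpha_d$ cancel, the remaining contribution is $\tfrac{1}{2}(d-1)(2k-2)=(d-1)(k-1)$, and the difference reduces to
$$(d-1)(k-1) - \big[(\delta-1)k - \max(0,\delta-2) - \max(1,\delta-3)\big].$$
Thus $|\mathcal{M}(G)|\le p^{\epsilon}\le p^{\frac{1}{2}(d-1)(n+k-2-(\alpha_1-\alpha_d))}$ holds precisely when
$$(\delta-1)k - \max(0,\delta-2) - \max(1,\delta-3) \;\le\; (d-1)(k-1),$$
which I will call $(\ast)$.

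To prove $(\ast)$ I would first record the two free structural facts: since $G/Z(G)$ is a quotient of $G$ we have $\delta=d(G/Z(G))\le d(G)=d$, and since $G$ is non-abelian we have $\delta\ge 2$ and $k\ge 1$. I would then split into the three regimes created by the maxima. For $\delta=2$ the left side of $(\ast)$ is $k-1$, dominated by $(d-1)(k-1)$ because $d-1\ge 1$ and $k-1\ge 0$. For $\delta=3$ the left side is $2(k-1)$, and $d\ge\delta=3$ gives $d-1\ge 2$, so $(d-1)(k-1)\ge 2(k-1)$. For $\delta\ge 4$ both maxima are attained by their variable arguments, the left side equals $(\delta-1)k-2\delta+5$, and bounding $(d-1)(k-1)\ge(\delta-1)(k-1)$ reduces the claim to $-2\delta+5\le-(\delta-1)$, i.e.\ $\delta\ge 4$. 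This establishes the first bound.

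For the ``in particular'' assertion I would note that a non-homocyclic $G^{ab}$ forces $\alpha_1>\alpha_d$, hence $\alpha_1-\alpha_d\ge 1$ since the invariants are integers; substituting this and using $d-1\ge 0$ enlarges the exponent to $\tfrac{1}{2}(d-1)(n+k-3)$, giving $|\mathcal{M}(G)|\le p^{\frac{1}{2}(d-1)(n+k-3)}$.

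The computations here are entirely routine; the only place demanding care is the bookkeeping of the three $\max$-regimes in $(\ast)$ together with the correct use of $\delta\le d$ in each, which is where I expect the (modest) substance of the proof to sit.
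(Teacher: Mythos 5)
Your proposal is correct and is exactly the route the paper intends: the paper states this corollary as an immediate consequence of Corollary \ref{raisharpenednil3boundimproved} (``this easily leads to\dots'') without writing out details, and your exponent comparison $(\delta-1)k-\max(0,\delta-2)-\max(1,\delta-3)\le (d-1)(k-1)$, verified in the three regimes $\delta=2$, $\delta=3$, $\delta\ge 4$ using $2\le\delta\le d$, is precisely the bookkeeping being suppressed. The ``in particular'' step via $\alpha_1-\alpha_d\ge 1$ is likewise the intended one.
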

	
	In \cite[Theorem 1.3]{Verm1974}, the author proved that if $K$ is a central subgroup of $G$ such that the restriction homomorphism from $\mathcal{M}(G)$ to $\mathcal{M}(K)$ is zero, then 
	\[|\gamma_2 (G)| |\mathcal{M}(G)| \leq |(G/K)^{ab} \otimes K|p^{\frac{1}{2}(m-r)(m+r-1)},\]
	where $|G/K|=p^m$ and $|\gamma_2 (G)K/K|=p^r$. In \cite[Theorem 1.4]{Rai2017}, the author improves the bound in \cite[Theorem 1.3]{Verm1974} for non-abelian finite $p$-groups of nilpotency class at least $3$ giving
	\[|\gamma_2 (G)| |\mathcal{M}(G)| \leq |(G/K)^{ab} \otimes K|p^{\frac{1}{2}(m-r)(m+r-2)+1}.\]
	This can be further improved for non-abelian finite $p$-groups of nilpotency class at least $4$:
	
	\begin{corollary} \label{raiVermaniboundimproved}
		Let $G$ be a finite $p$-group of nilpotency class at least $4$ and $K$ a central subgroup of $G$ such that the restriction homomorphism from $\mathcal{M}(G)$ to $\mathcal{M}(K)$ is zero. If $|G/K|=p^m$ and $|\gamma_2 (G) K/K|=p^r$, then 
		\[|\gamma_2 (G)| |\mathcal{M}(G)| \leq |(G/K)^{ab} \otimes K|p^{\frac{1}{2}d(G/K)(m+r-2)}.\]
		In particular,
		\[|\gamma_2 (G)| |\mathcal{M}(G)| \leq |(G/K)^{ab} \otimes K|p^{\frac{1}{2}(m-r)(m+r-2)}.\]
	\end{corollary}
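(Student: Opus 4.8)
The plan is to follow the exact-sequence reduction used by Vermani \cite{Verm1974} and Rai \cite{Rai2017}, and then to feed in the improved class-$3$ estimate of Theorem \ref{nil3bound} in place of the classical one. Under the hypotheses of the corollary, the relevant portion of the Ganea exact sequence for the central extension $1\to K\to G\to G/K\to 1$,
\[(G/K)^{ab}\otimes K \longrightarrow \mathcal{M}(G) \longrightarrow \mathcal{M}(G/K) \longrightarrow K\cap\gamma_2 G \longrightarrow 1,\]
gives, exactly as in \cite[Theorem 1.3]{Verm1974}, the reduction
\[|\gamma_2 G|\,|\mathcal{M}(G)| \leq |(G/K)^{ab}\otimes K|\cdot |\gamma_2(G/K)|\,|\mathcal{M}(G/K)|,\]
where one uses $|\gamma_2 G|/|K\cap\gamma_2 G| = |\gamma_2 G\,K/K| = |\gamma_2(G/K)| = p^{r}$. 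The whole problem thus reduces to estimating the single factor $|\gamma_2(G/K)|\,|\mathcal{M}(G/K)|$ for the quotient $Q:=G/K$, which has order $p^{m}$ and $|\gamma_2 Q|=p^{r}$; Vermani bounds this factor by $p^{\frac12(m-r)(m+r-1)}$ and Rai by $p^{\frac12(m-r)(m+r-2)+1}$ via general (class-$2$) Schur-multiplier estimates.

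The key new observation is that the hypothesis $c\ge 4$ forces $Q=G/K$ to have nilpotency class at least $3$. Indeed, $c\ge 4$ gives $\gamma_4 G\neq 1$, so $\gamma_3 G\not\subseteq Z(G)$; since $K\subseteq Z(G)$ we get $\gamma_3 G\not\subseteq K$, whence $\gamma_3(Q)=\gamma_3 G\,K/K\neq 1$. This is precisely the regime in which the sharper bound applies, so I would invoke Theorem \ref{nil3bound} (equivalently Corollary \ref{raisimplerbound}) for $Q$. With $d(Q)=d(G/K)$, $\log_p|Q|=m$ and $\log_p|\gamma_2 Q|=r$, Corollary \ref{raisimplerbound} yields $|\mathcal{M}(Q)|\le p^{\frac12(d(G/K)-1)(m-r-2)}$, and therefore
\[|\gamma_2(G/K)|\,|\mathcal{M}(G/K)| \leq p^{\frac12(d(G/K)-1)(m-r-2)+r}.\]

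It then remains to simplify to the stated form. A direct computation gives
\[\tfrac12 d(G/K)(m+r-2)-\Big(\tfrac12(d(G/K)-1)(m-r-2)+r\Big) = r\big(d(G/K)-1\big)+\tfrac12(m-r-2),\]
which is nonnegative because $d(G/K)\ge 1$ and, for a group of class $\ge 3$, one has $m-r=\log_p|Q^{ab}|\ge 2$. Substituting into the reduction produces $|\gamma_2 G|\,|\mathcal{M}(G)|\le |(G/K)^{ab}\otimes K|\,p^{\frac12 d(G/K)(m+r-2)}$, and the ``in particular'' statement is immediate from $d(G/K)=d\big((G/K)^{ab}\big)\le m-r$ together with $m+r-2\ge 0$. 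The step to get right is the structural one: recognising that $c\ge 4$ is exactly what lifts the quotient into the class-$3$ range where Theorem \ref{nil3bound} becomes available, which is what converts Rai's additive $+1$ into the sharper exponent; the remaining manipulation is routine bookkeeping. One caveat worth flagging is that Theorem \ref{nil3bound} and Corollary \ref{raisimplerbound} carry the hypothesis $p\neq 2,3$, so this restriction is inherited here even though it is not displayed in the statement.
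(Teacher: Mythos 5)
Your proof is correct and follows essentially the same route the paper intends: Corollary \ref{raiVermaniboundimproved} is stated there without a written proof, as the outcome of running the Vermani--Rai reduction $|\gamma_2(G)||\mathcal{M}(G)| \leq |(G/K)^{ab} \otimes K|\,|\gamma_2(G/K)||\mathcal{M}(G/K)|$ and replacing Rai's class-$\geq 3$ multiplier bound by Corollary \ref{raisimplerbound}, which applies exactly because $K \leq Z(G)$ and class $\geq 4$ force $\gamma_3(G/K) = \gamma_3(G)K/K \neq 1$ --- precisely your structural observation, and your bookkeeping identity and the deduction of the ``in particular'' via $d(G/K) \leq m-r$ check out. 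Your caveat is also well taken: the restriction $p \neq 2,3$ is genuinely inherited from Theorem \ref{nil3bound} and is tacitly assumed, though not displayed, in the paper's statement of this corollary.
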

	
	As a corollary to \cite[Theorem 1.1]{Rai2017}, Rai in \cite[Theorem 1.5]{Rai2017} proved that for non-abelian groups of order $p^n$ and coclass $r$, with derived subgroup of size $p^k$, $|\mathcal{M}(G)| \leq p^{\frac{1}{2}(r^2-r)+kr+1}$. Applying the same argument to the bound from Corollary \ref{raisimplerbound}, one immediately obtains the following improvement to \cite[Theorem 1.5]{Rai2017}:
	
	\begin{corollary}\label{raicoclassboundimproved}
		Let $G$ be a finite $p$-group of coclass $r$ with nilpotency class strictly greater than $2$ and $p \neq 2,3$. Denote the size of $\gamma_2G$ by $p^k$. Then $|\mathcal{M}(G)| \leq p^{\frac{r^2-r}{2}+kr}$. 
	\end{corollary}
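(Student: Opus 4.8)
The plan is to reduce Corollary \ref{raicoclassboundimproved} to a single arithmetic inequality and then quote Corollary \ref{raisimplerbound}, which is exactly the strategy by which Rai deduces \cite[Theorem 1.5]{Rai2017} from \cite[Theorem 1.1]{Rai2017}. Since Corollary \ref{raisimplerbound} gives $|\mathcal{M}(G)| \leq p^{\frac{1}{2}(d-1)(n-k-2)}$ under the present hypotheses ($p \neq 2,3$, class $c \geq 3$), the entire content of the corollary is the purely numerical claim
\[
\tfrac{1}{2}(d-1)(n-k-2) \leq \tfrac{r^2-r}{2}+kr,
\]
after which raising $p$ to both sides finishes the proof. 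Removing the $+1$ from Rai's base bound (which is the improvement effected by Corollary \ref{raisimplerbound}) is what removes the corresponding $+1$ from his coclass bound $\tfrac{1}{2}(r^2-r)+kr+1$.

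First I would record the two coclass constraints. Recall $r=n-c$ and $|\gamma_2 G|=p^k$. The lower central series restricted to $\gamma_2 G$ gives a chain $\gamma_2 G=\gamma_2 > \gamma_3 > \cdots > \gamma_{c+1}=1$ with $c-1$ strict inclusions, so each of its $c-1$ factors contributes at least $1$ to $k=\log_p|\gamma_2 G|$; hence $k \geq c-1$, i.e. $c \leq k+1$. Substituting into $n=c+r$ gives $n-k \leq r+1$. Next, since $d=d(G)=d(G^{\mathrm{ab}})$ and $G^{\mathrm{ab}}$ is an abelian $p$-group of order $p^{\,n-k}$, I have $d \leq n-k \leq r+1$. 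Because $G$ has class $c \geq 3$ it is non-abelian, so $d \geq 2$ and $|G^{\mathrm{ab}}| \geq p^d \geq p^2$; this forces $n-k-2 \geq 0$, so every quantity appearing below is non-negative.

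With these in hand the estimate is routine. Setting $u:=n-k-1$, I have $1 \leq u \leq r$ and $d-1 \leq n-k-1=u$, and since $u \mapsto \tfrac12 u(u-1)$ is non-decreasing for $u \geq 1$,
\[
\tfrac{1}{2}(d-1)(n-k-2) \leq \tfrac{1}{2}(n-k-1)(n-k-2) = \tfrac{u(u-1)}{2} \leq \tfrac{r(r-1)}{2} = \tfrac{r^2-r}{2} \leq \tfrac{r^2-r}{2}+kr,
\]
which is the desired inequality (with slack $kr$ to spare). Combined with Corollary \ref{raisimplerbound} this yields $|\mathcal{M}(G)| \leq p^{\frac{r^2-r}{2}+kr}$.

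The only genuine step is the coclass bookkeeping of the second paragraph: translating "coclass $r$, class $c \geq 3$" into the inequalities $c \leq k+1$ and $d \leq n-k$, and confirming that the degenerate ranges (which would make $n-k-2$ negative and invalidate the monotonicity step) cannot occur. I do not anticipate any obstacle beyond this careful non-negativity check, since the remainder is monotonicity plus a direct appeal to Corollary \ref{raisimplerbound}, whose hypotheses $p \neq 2,3$ and $c \geq 3$ are exactly those assumed here.
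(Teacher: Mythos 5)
Your proposal is correct and follows exactly the paper's route: the paper proves this corollary in one line, by ``applying the same argument'' that Rai used to deduce his coclass bound \cite[Theorem 1.5]{Rai2017} from \cite[Theorem 1.1]{Rai2017}, but starting from the improved base bound of Corollary \ref{raisimplerbound}, which is precisely your strategy. Your coclass bookkeeping ($k \geq c-1$ from the strict lower central chain, hence $d-1 \leq n-k-1 \leq r$, together with the non-negativity check $n-k-2 \geq 0$) correctly supplies the arithmetic details the paper leaves implicit, so there is no gap.
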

	
	\section*{Acknowledgements} V. Z. Thomas acknowledges research support from SERB, DST, Government of India grant MTR/2020/000483. The authors asked Marcin Mazur whether he knows a bound for the set $Y_X:=\{(a,b,c) \mid (a,b),(a,c),(b,c) \in X, 1 \leq a < b < c \leq d\}$ where $X$ is a subset of $\{(a,b) \mid  1 \leq a < b \leq d\}$. Mazur informed us that there is a bijection between $Y_X$ and the triangles in a graph with vertices $\{1, \ldots , d\}$ and edges $X$ and pointed us to Rivin's paper \cite{Rivi2002}. We are grateful to him for this input as it simplified the exposition in Section 3. We also thank Robert F. Morse for providing us with the GAP code to compute the examples mentioned in the introduction. We thank Komma Patali for helpful discussions.

	\bibliographystyle{amsplain}
	\bibliography{MT}

\end{document}